\newcommand{\Cdb}{\ensuremath{\mathbb{C}}}
\newcommand{\Ddb}{\ensuremath{\mathbb{D}}}
\newcommand{\Edb}{\ensuremath{\mathbb{E}}}
\newcommand{\Rdb}{\ensuremath{\mathbb{R}}}
\newcommand{\Zdb}{\ensuremath{\mathbb{Z}}}
\newcommand{\B}{\mbox{${\mathcal B}$}}
\newcommand{\M}{\mbox{${\mathcal M}$}}
\newcommand{\N}{\mbox{${\mathcal N}$}}
\renewcommand{\P}{\mbox{${\mathcal P}$}}
\newcommand{\R}{\mbox{${\mathcal R}$}}
\newcommand{\norm}[1]{\Vert#1\Vert}
\newcommand{\bignorm}[1]{\bigl\Vert#1\bigr\Vert}
\newcommand{\Bignorm}[1]{\Bigl\Vert#1\Bigr\Vert}
\newtheorem{theorem}{Theorem}[section]
\newtheorem{lemma}[theorem]{Lemma}
\newtheorem{proposition}[theorem]{Proposition}
\newtheorem{definition}[theorem]{Definition}
\theoremstyle{remark}
\newtheorem{remark}[theorem]{\bf Remark}
\theoremstyle{definition}
\numberwithin{equation}{section}
\begin{document}

\title[Dilation of Fourier multipliers]
{Absolute dilations of ucp self-adjoint Fourier multipliers: the non unimodular case}

\author[C. Duquet]{Charles Duquet}
\email{charles.duquet@univ-fcomte.fr}
\address{Laboratoire de Math\'ematiques de Besan\c con, 
Université Marie et Louis Pasteur, 16 route de Gray
25030 Besan\c{c}on Cedex, FRANCE}

\author[C. Le Merdy]{Christian Le Merdy}
\email{clemerdy@univ-fcomte.fr}
\address{Laboratoire de Math\'ematiques de Besan\c con, 
Université Marie et Louis Pasteur, 16 route de Gray
25030 Besan\c{c}on Cedex, FRANCE}

\date{\today}

\begin{abstract}
Let $\varphi$ be a normal semi-finite faithful weight  on a von Neumann algebra $A$,
let $(\sigma^\varphi_r)_{r\in{\mathbb R}}$ denote the modular automorphism group of $\varphi$, 
and let $T\colon A\to A$ be a linear map. 
We say that $T$ admits
an absolute dilation if there exists another von Neumann algebra $M$ equipped with a
normal semi-finite faithful weight $\psi$,
a $w^*$-continuous, unital and weight-preserving
$*$-homomorphism  $J\colon A\to M$
such that $\sigma^\psi\circ J=J\circ \sigma^\varphi$, 
as well as a weight-preserving
$*$-automorphism  $U\colon M\to M$ such that
$T^k=\Edb_JU^kJ$ for all integer $k\geq 0$, 
where $\Edb_J\colon M\to A$ is the conditional expectation associated with $J$.
Given any locally compact group $G$ and any real valued function $u\in C_b(G)$, we prove
that if $u$ induces a unital completely positive Fourier multiplier
$M_u\colon VN(G) \to VN(G)$, then $M_u$  admits
an absolute dilation. Here $VN(G)$ is equipped with its Plancherel weight $\varphi_G$. 
This result had been settled by the first named author in the case when
$G$ is unimodular so the salient point in this paper is that 
$G$ may be non unimodular, and hence $\varphi_G$ may not be a trace. The absolute dilation
of $M_u$ implies that for any $1<p<\infty$, the $L^p$-realization
of $M_u$ can be dilated into an isometry acting on a non-commutative $L^p$-space. 
We further prove that if $u$ is valued in $[0,1]$, then   the $L^p$-realization
of $M_u$ is a Ritt operator with a bounded $H^\infty$-functional calculus.
\end{abstract}

\maketitle

\vskip 1cm
\noindent
{\it 2000 Mathematics Subject Classification :}  
47A20, 46L51, 43A22.

\smallskip
\noindent
{\it Key words:} 
Fourier multipliers. Dilations. Non-commutative $L^p$-spaces.

\vskip 1cm

\section{Introduction}\label{1Intro}    
Let $(\Omega,\mu)$ be any measure space, let  $1<p<\infty$ and let 
$T\colon L^p(\Omega)\to L^p(\Omega)$ be a positive contraction.
Akcoglu’s dilation theorem \cite{A,AS} asserts that $T$ admits an isometric dilation, that  is,
there exists a second
measure space $(\Omega',\mu')$, an isometry $J\colon L^p(\Omega)\to L^p(\Omega')$, an isometry
$U\colon L^p(\Omega')\to L^p(\Omega')$ and a contraction  $Q\colon L^p(\Omega')\to
L^p(\Omega)$ such that $T^k=QU^kJ$ for all integers $k\geq 0$. This $L^p$-analogue of Nagy's dilation theorem
has been instrumental in  various topics concerning $L^p$-operators, such as functional calculus, harmonic analysis and
ergodic theory. We refer to \cite{AFL, HVVW,KW, LMX} and the references therein for specific results.

Turning to non-commutative analysis, we note that Akcoglu’s theorem does not extend to 
non-commutative $L^p$-spaces \cite{JLM}.
This obstacle led to new, interesting difficulties 
in the development of operator theory and harmonic analysis on non-commutative $L^p$-spaces. 
Also, it became important to exhibit classes of 
completely positive contractions
on non-commutative $L^p$-spaces which do admit a dilation into an isometry acting on a
second non-commutative $L^p$-space. We refer to \cite{Ar1, Ar3, Ar2, D, DL, HRW, JLX} 
for various  results of this type.
In all these papers (except \cite{HRW}), the construction of an isometric dilation 
of a map $T\colon L^p(A)\to L^p(A)$ relies on the
existence of a trace (or weight) preserving dilation of $T$ on the underlying
von Neumann algebra $A$. This leads us to introduce the following 
definition, which extends \cite[Definition 2.1]{DL}. Let $\varphi$ be a normal semi-finite faithful weight on
$A$ (we say that $(A,\varphi)$ is a weighted von Neumann algebra in this case).
Let $(\sigma^\varphi_r)_{r\in{\mathbb R}}$ denote the modular automorphism group of $\varphi$. We say that 
a linear map $T\colon A\to A$ admits
an absolute dilation if there exists another
weighted von Neumann algebra $(M,\psi)$, a
$w^*$-continuous, unital and weight-preserving
$*$-homomorphism  $J\colon A\to M$
such that $\sigma^\psi\circ J=J\circ \sigma^\varphi$, 
as well as a weight-preserving
$*$-automorphism  $U\colon M\to M$ such that
$T^k=\Edb_JU^kJ$ for all integers $k\geq 0$, 
where $\Edb_J\colon M\to A$ is the conditional expectation associated with $J$.
This dilation property implies that $T$ is unital, completely postive and weight-preserving,
that $T$ has a natural extension to $L^p(A)$ and that the
latter admits a dilation into an isometry acting on another 
non-commutative $L^p$-space (see Proposition \ref{2LpDil}).

In \cite{DL}, we gave a characterization of bounded Schur multipliers which admit an absolute dilation.
In \cite{D}, the first named author showed that for any unimodular locally compact group $G$,
every unital completely positive Fourier multiplier $VN(G)\to VN(G)$
(in the sense of \cite{DCH}) whose symbol is real-valued admits an absolute dilation. 
The case  when $G$ is discrete had been settled in \cite{Ar1}. (The discrete case also 
follows from \cite[Corollary]{Ric} and \cite[Theorem 4.4]{HM}). The objective of this paper
is to extend the above mentioned result to the non unimodular case. 
Let $G$ be any locally compact group, let $u\colon G\to\Rdb$ be a continuous
function and assume that $u$ induces
a unital completely positive Fourier multiplier $M_u\colon VN(G)\to VN(G)$.
We show that $M_u$ admits an absolute dilation, see Theorem \ref{4AD}.

In the previous paragraph, the group von Neumann algebra $A=VN(G)$ is
equipped with its Plancherel weight $\varphi=\varphi_G$ (see
Section 3 for background). This specific weight is a trace if and only if $G$ is unimodular.
This is the reason why passing from the unimodular case (considered in \cite{D})
to the non unimodular case (considered in the present paper) requires new arguments. 
When $\varphi_G$ is a non tracial weight, we need to consider the Haagerup non-commutative
$L^p$-spaces  associated with $(VN(G),\varphi_G)$ (see Subsection \ref{22} for background). In particular, the 
space $L^1(VN(G),\varphi_G)$ plays a key role
in the proof of Theorem \ref{4AD}.

We note that a dilation theorem for semigroups of unital completely positive Fourier multiplier
with real symbols was established in \cite{Ar2}. However it cannot be applied to our case. Indeed
there is no known way to pass from a dilation result for semigroups to a dilation result for single operators.
We also note that if we restrict to the case when $G$ is unimodular,  the proof of Theorem \ref{4AD}
is partly different and somewhat 
simpler than the one
in \cite{D} or \cite{Ar1}.

In the last part of the paper (Section 5), we give an application of our dilation result to functional calculus.
We consider a unital completely positive Fourier multiplier $M_u\colon VN(G)\to VN(G)$ and we assume 
that $u$ is valued in $[0,1]$. Then we show that for any $1<p<\infty$, the 
$L^p$-realization of $T$ on $L^p(VN(G),\varphi_G)$ is a Ritt operator which admits a bounded
$H^\infty$-functional calculus in the sense of 
\cite{ALM, LM}.

\bigskip
\section{Absolute dilations and isometric $p$-dilations
in the weighted case}\label{2AD}  
We refer the reader to \cite{Tak1}
for general information on von Neumann algebras.
Given any von Neumann algebra $A$, we let 
$A_*$ denote its predual. We let 
$A^+$ and $A_{*}^+$ denote the positive cones of $A$ and $A_*$,
respectively. A map $V\colon A\to A'$ between two von Neumann algebras
is called positive if $V(A^+)\subset A'^+$. We will use the stronger notion
of completely positive map, for which we refer to \cite[Section IV.3]{Tak1}.

\smallskip
\subsection{Absolute dilations}\label{21}
We refer to  \cite{S,Tak2} for general information on weights. 
Let $A$ be a von Neumann algebra equipped
with a normal semi-finite faithful weight $\varphi$. 
In this situation, we say that $(A,\varphi)$
is a weighted von Neumann algebra. We let 
$(\sigma^\varphi_r)_{r\in{\mathbb R}}$ denote the
modular automorphism group of $\varphi$. Then we set
$$
\N_\varphi=\bigl\{x\in A\, :\, \varphi(x^*x)<\infty\bigr\}
$$
and 
$$
\M_\varphi={\rm Span}\bigl\{y^*x\, :\, x,y\in\N_\varphi\bigr\}.
$$
We recall that $\N_\varphi$ is a $w^*$-dense left ideal of $A$ and that 
$\M_\varphi$ is a $w^*$-dense 
$*$-subalgebra of $A$. Further 
$\M_\varphi={\rm Span}\{x^*x\, :\, x\in\N_\varphi\}$ and the restriction of 
$\varphi$ to the set 
$\{x^*x\, :\, x\in \N_\varphi\}$ uniquely extends to a functional (still denoted by)
$$
\varphi\colon\M_\varphi\longrightarrow \Cdb.
$$
Let $(M,\psi)$ be a second  weighted von Neumann algebra and let 
$T\colon A\to M$ be any positive map. We say that
$T$ is weight-preserving provided that 
$$
\psi\circ T=\varphi\quad\hbox{on}\ A^+.
$$

Let  $J\colon A\to M$ be a $w^*$-continuous, unital and weight-preserving
$*$-homomorphism such that 
$$
\sigma^\psi\circ J=J\circ \sigma^\varphi.
$$
Here and thereafter, this identity means that 
$\sigma^\psi_r\circ J=J\circ \sigma^\varphi_r$ on
$A$ for all $r\in\Rdb$. According to either  
\cite[Theorem 10.1]{S} or \cite[Theorem IX.4.2]{Tak2}, there
exists a unique weight-preserving conditional expectation
\begin{equation}\label{2EJ}
\Edb_J\colon M\longrightarrow  A.
\end{equation} 
That is, $\Edb_J\colon M\to A$ is the unique linear map 
satisfying $\Edb_J\circ J=I_A$, $\norm{\Edb_J}=1$ and $\varphi\circ \Edb_J=\Edb_J\circ\psi$.
We note that we further have $\sigma^\varphi\circ \Edb_J=\Edb_J\circ \sigma^\psi$, see 
\cite[Corollary 10.5]{S}, and that $\Edb_J$ is $w^*$-continuous (see also Lemma \ref{EJ=J1*}).

\begin{definition}\label{1DefAD} 
Let $(A,\varphi)$ be a weighted von Neumann algebra
and let $T\colon A\to A$ be a bounded map. 
We say that $T$ admits
an absolute dilation if there exists another
weighted von Neumann algebra $(M,\psi)$, a
$w^*$-continuous, unital and weight-preserving
$*$-homomorphism  $J\colon A\to M$
such that $\sigma^\psi\circ J=J\circ \sigma^\varphi$, 
as well as a weight-preserving
$*$-automorphism  $U\colon M\to M$ such that
\begin{equation}\label{1Formula}
\forall\, k\geq 0,\qquad T^k = \Edb_JU^k J.
\end{equation}
\end{definition}

\begin{remark} \ 

\smallskip
{\bf (1)} Assume that $T\colon (A,\varphi)\to (A,\varphi)$ admits
an absolute dilation and consider $(M,\psi)$, 
$J$ and $U$ as in Definition 
\ref{1DefAD}. In particular, we have $T=\Edb_JU J$. 
We observe that $J,U$ and $\Edb_J$ are 
both unital, completely positive and weight-preserving. 
Further $\sigma^\psi\circ U
=U\circ \sigma^\psi$, by 
\cite[Corollary VIII.1.4]{Tak2}. Since 
$\sigma^\psi\circ J=J\circ \sigma^\varphi$ and 
$\sigma^\varphi\circ \Edb_J=\Edb_J\circ \sigma^\psi$, we deduce that 
$T$ is unital, completely positive, weight-preserving and that 
$\sigma^\varphi\circ T=T\circ\sigma^\varphi$.

\smallskip 
{\bf (2)} Assume that $\varphi$ is a state and consider $T\colon (A,\varphi)\to (A,\varphi)$. If 
$T$ admits
an absolute dilation and if $(M,\psi)$ and $J\colon A\to M$ are given by 
Definition \ref{1DefAD}, then $\psi(1)=\psi(J(1))=\varphi(1)=1$, hence $\psi$ is a state.
It therefore follows from \cite[Theorem 4.4]{HM}, that 
$T$ admits
an absolute dilation if and only if 
$T$ is factorizable in the sense 
of \cite{AD, Ric}. Absolute dilations in the state case go back to \cite{K}.
We refer to 
\cite{Ar3} for various examples.

\smallskip
{\bf (3)}  Assume that $\varphi$ is a trace
and that $T\colon (A,\varphi)\to (A,\varphi)$ admits
an absolute dilation. Then the weighted von Neumann algebra $(M,\psi)$
from Definition \ref{1DefAD} can be chosen in such a way that $\psi$ is a trace.

Indeed, assume that $T$ satisfies Definition \ref{1DefAD} and let $M_0$
be the centralizer of $\psi$, that is, 
$M_0=\{m\in M\, :\, \sigma^\psi_r(x)=x\ \hbox{for all}\ r\in\Rdb\}$. Let 
$$
\B={\rm Span}\Bigl\{\sum_{k=1}^m U^{k_1}J(x_1)U^{k_2}J(x_2)\cdots U^{k_m}J(x_m)\, :\,
m\geq 1,\, k_1,\ldots,k_m\in\Zdb,\, x_1,\ldots x_m\in A\Bigr\},
$$
and let $B=\overline{\B}^{w^*}$. Since $\sigma^\varphi$ is trivial, $U^kJ(x)\in M_0$ for all 
$x\in A$ and  $k\in\Zdb$. Hence
$B$ is a sub-von Neumann algebra of $M_0$. Therefore,
the restriction $\psi_B$ of $\psi$ to $B^+$ is a normal faithful trace. Let us show its semi-finiteness.
For any $x\in \N_\varphi$ and any $k\in\Zdb$, we have 
$$
\psi\bigl((U^kJ(x))^*(U^kJ(x))\bigr)= \psi\bigl(U^kJ(x^*x)\bigr)=\varphi(x^*x)\,<\infty,
$$
that is, $U^kJ(x)\in\N_\psi$. Since $\N_\varphi$ is $w^*$-dense in $A$
and $J,U$ are $w^*$-continuous, $U^kJ(\N_\varphi)$ is $w^*$-dense in
$U^kJ(A)$. Since $\N_\psi$ is a left ideal, we deduce that
$\N_\psi\cap \B$ is $w^*$-dense in $\B$. Consequently,
$\N_\psi\cap B$ is $w^*$-dense in $B$. Thus, $\psi_B$ is semi-finite.
Consequently, $(M,\psi)$ can be replaced by $(B,\psi_B)$ in the dilation of $T$.

In the paper \cite{DL}, we considered absolute dilations in the tracial case. The above paragraph
shows that in the case when  $\varphi$ is a trace, $T\colon (A,\varphi)\to (A,\varphi)$ admits
an absolute dilation in the sense of Definition \ref{1DefAD} if and only if 
it admits
an absolute dilation in the sense of \cite{DL}.

\smallskip
{\bf (4)} Combining \cite[Section 3]{HM} with (3) above, we obtain examples
of unital, completely positive, weight-preserving maps 
$T\colon (A,\varphi)\to(A,\varphi)$ such that 
$\sigma^\varphi\circ T=T\circ\sigma^\varphi$, without any absolute dilation (see \cite[Remark 7.3]{DL} 
for details).
\end{remark}

We record for further use the so-called 
Pedersen-Takesaki theorem (see \cite[Theorem 6.2]{S}).

\begin{lemma}\label{1PT}
Let $(A,\varphi)$ be a weighted von Neumann algebra and let
$B\subset \N_\varphi$ be a $w^*$-dense $*$-algebra such that $\sigma^\varphi_r(B)\subset B$
for all $r\in\Rdb$. Let $\gamma$ be a normal semi-finite weight on $A$ such that 
$\gamma\circ\sigma^\varphi = \gamma$
and $\gamma(x^*x)=\varphi(x^*x)$  for all $x\in B$. Then $\gamma=\varphi$.
\end{lemma}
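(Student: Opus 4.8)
The plan is to reduce the equality $\gamma=\varphi$ to the computation of a single Radon--Nikodym derivative. Since $\gamma$ is a normal semi-finite weight with $\gamma\circ\sigma^\varphi_t=\gamma$ for all $t\in\Rdb$, the Pedersen--Takesaki/Connes Radon--Nikodym theorem provides a unique positive self-adjoint operator $h$ affiliated with the centralizer $A^\varphi=\{a\in A:\sigma^\varphi_t(a)=a,\ \forall t\}$ such that $\gamma=\varphi_h$, where $\varphi_h(\cdot)=\varphi(h^{1/2}\,\cdot\,h^{1/2})$ is defined through the spectral truncations of $h$. The whole lemma then amounts to the single assertion $h=1$, since $\varphi_1=\varphi$. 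So the real task is to show that the boundary condition $\gamma(x^*x)=\varphi(x^*x)$ on $B$ forces $h=1$.

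First I would record two preliminaries. By polarization, as $B$ is a $*$-algebra and $\gamma,\varphi$ agree on $\{x^*x:x\in B\}$, they agree on the $w^*$-dense $*$-subalgebra $\mathrm{Span}\{y^*x:x,y\in B\}$; in particular $B\subset\N_\varphi\cap\N_\gamma$. Next, writing $\Lambda_\varphi\colon\N_\varphi\to H_\varphi$ for the GNS map of $\varphi$, I would use that $B$ is a \emph{core}: because $B$ is $w^*$-dense, contained in $\N_\varphi$, and $\sigma^\varphi_r$-invariant, a standard regularization argument (smearing elements of $B$ against $\sigma^\varphi_t$ and approximating the resulting entire elements by Riemann sums, which stay in $B$ by invariance) shows that $\Lambda_\varphi(B)$ is dense in $H_\varphi$ and is a core for $S_\varphi$ and $\Delta_\varphi^{1/2}$.

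The heart of the argument is the identity, valid for any bounded self-adjoint $k\in A^\varphi$ and any $x\in\N_\varphi$,
\[
\Lambda_\varphi(xk)=J_\varphi kJ_\varphi\,\Lambda_\varphi(x),
\]
which follows from $\sigma^\varphi_{-i/2}(k)=k$ since $k$ lies in the centralizer. Applying this with $k=h^{1/2}e_\varepsilon$, where $e_\varepsilon=\mathbf{1}_{[\varepsilon,1/\varepsilon]}(h)\in A^\varphi$, and letting $\varepsilon\to0$, the definition of $\varphi_h$ gives, for every $x\in B$,
\[
\varphi(x^*x)=\gamma(x^*x)=\lim_{\varepsilon\to0}\bignorm{J_\varphi h^{1/2}e_\varepsilon J_\varphi\Lambda_\varphi(x)}^2=\bignorm{J_\varphi h^{1/2}J_\varphi\,\Lambda_\varphi(x)}^2 .
\]
Because $h$ commutes with $\Delta_\varphi$ (the centralizer is the fixed-point algebra of $\sigma^\varphi$), so does the positive self-adjoint operator $X:=J_\varphi h^{1/2}J_\varphi$, and the displayed identity reads $\norm{X\xi}=\norm{\xi}$ for $\xi$ in the dense set $\Lambda_\varphi(B)\subset\mathrm{dom}(X)$. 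Using once more that $\Lambda_\varphi(B)$ is a core, $X$ is isometric, hence, being positive self-adjoint, $X=1$; applying $J_\varphi$ yields $h=1$, i.e. $\gamma=\varphi$. When $h$ and $h^{-1}$ are bounded this last step is immediate (a bounded self-adjoint operator whose quadratic form equals that of the identity on a dense set is the identity), and the general case reduces to it by the monotone approximation above.

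The main obstacle is exactly the interface between $w^*$-density and modular analyticity: making rigorous that the $\sigma^\varphi$-invariant $w^*$-dense $*$-algebra $B$ is a genuine core, both for the GNS map (so that $\Lambda_\varphi(B)$ is dense) and for the unbounded operator $X=J_\varphi h^{1/2}J_\varphi$ (so that isometry on $\Lambda_\varphi(B)$ propagates to $X=1$). The bounded-$h$ computation is essentially formal; all the technical weight lies in the unbounded bookkeeping of $\varphi_h$ and in verifying these core properties, which is precisely where the full strength of Tomita--Takesaki theory, and of the Pedersen--Takesaki theorem itself, is required.
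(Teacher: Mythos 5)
The paper never proves this lemma: it is recorded precisely as the Pedersen--Takesaki theorem, with a bare citation to \cite[Theorem 6.2]{S}, so your reconstruction can only be judged on its own terms. Your route is the standard companion route: invoke the Pedersen--Takesaki Radon--Nikodym theorem to write $\gamma=\varphi_h$ for a positive self-adjoint $h$ affiliated with the centralizer $A^\varphi$, then force $h=1$. One caution: the lemma you are proving \emph{is} the Pedersen--Takesaki comparison theorem, and the Radon--Nikodym theorem you invoke is its sibling from the same source, so your argument is not more elementary than the citation -- it is legitimate only if the RN theorem is taken as an independent input. Granting that, the central computation is correct: for self-adjoint $k\in A^\varphi$ one has $\sigma^\varphi_z(k)=k$, hence $\Lambda_\varphi(xk)=J_\varphi kJ_\varphi\Lambda_\varphi(x)$, and the monotone limit along the spectral truncations $he_\varepsilon$ gives $\varphi(x^*x)=\gamma(x^*x)=\|X\Lambda_\varphi(x)\|^2$ with $X=J_\varphi h^{1/2}J_\varphi$, the finiteness correctly placing $\Lambda_\varphi(x)$ in $\mathrm{dom}(X)$.

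The concrete flaw is the last step. You conclude $X=1$ ``using once more that $\Lambda_\varphi(B)$ is a core,'' but the core property you argued for concerns $S_\varphi$ and $\Delta_\varphi^{1/2}$, not $X$; commutation of $h$ with $\Delta_\varphi$ does not make $\Lambda_\varphi(B)$ a core for $X$, and you never establish this, so as written the passage from isometry on $\Lambda_\varphi(B)$ to $X=1$ is unsupported. Fortunately no core property is needed -- density alone suffices: by polarization $X$ restricted to $D=\Lambda_\varphi(B)$ is isometric; an isometric densely defined operator has graph closure an everywhere-defined isometry $W$ (Cauchy sequences in $D$ have Cauchy images); since $X$ is self-adjoint, hence closed, $W\subset X$ forces $\mathrm{dom}(X)=H_\varphi$, so $X$ is a bounded positive self-adjoint isometry, whence $X=1$ and $h^{1/2}=J_\varphi XJ_\varphi=1$. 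Two smaller remarks: your smearing sketch alone does not yield density of $\Lambda_\varphi(B)$ in $H_\varphi$ -- regularization keeps you inside (the closure of) $\Lambda_\varphi(B)$, but to reach $\Lambda_\varphi(x)$ for general $x\in\N_\varphi$ one also needs a Kaplansky-type bounded $\sigma$-strong approximation $b_i\to x$ with $b_i\in B$, using $\Lambda_\varphi(b_ib)=b_i\Lambda_\varphi(b)$; this density statement is a known technical lemma under exactly your hypotheses (note $B=B^*\subset\N_\varphi\cap\N_\varphi^*$) and should be cited rather than merely gestured at. With these repairs your proof is complete.
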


\smallskip
\subsection{Haagerup non-commutative $L^p$-spaces}\label{22}
Let $(A,\varphi)$ be a weighted von Neumann algebra. We
recall the definition of the Haagerup non-commutative $L^p$-spaces
$L^p(A,\varphi)$, as well as some of their properties. We refer the reader 
to \cite{H}, \cite[Section 1]{HJX}, \cite[Chapter 9]{Hiai}, \cite{PX} and \cite{Terp}
for details and complements. 
Assume that $A$ acts on some Hilbert space $H$ and let
$$
\mathcal R = A \rtimes_{\sigma^\varphi} \Rdb\subset 
B(L^2(\Rdb))\overline{\otimes} A\subset B\bigl(L^2(\Rdb;H)\bigr)
$$
be the crossed product associated with $\sigma^\varphi\colon\Rdb\to
{\rm Aut}(A)$, see e.g. \cite[Chapter IV]{S} or \cite[Chapter X]{Tak2}. 
We regard $A$ as a sub-von Neumann
algebra of $\R$ in the natural  way. Let 
$\widehat{\sigma}^\varphi\colon\Rdb\simeq \widehat{\Rdb}
\to {\rm Aut}(\R)$ be the dual action of $\sigma^\varphi$ 
and note that 
$\widehat{\sigma}^\varphi_r(x)=x$ 
for all $r\in\Rdb$ if and only if 
$x\in A$.

There exists a unique normal semi-finite faithful trace
$\tau_0$ on $\R$ such that 
$$
\tau_0\circ \widehat{\sigma}^\varphi_r =e^{-r}\tau_0,\qquad 
r\in\Rdb.
$$
This trace gives rise to the $*$-algebra $L^{0}(\R,\tau_0)$ of
$\tau_0$-measurable operators \cite[Chapter 4]{Hiai}. 
Then for any $1\leq p\leq\infty$, 
the Haagerup $L^p$-space $L^p(A,\varphi)$ is defined as
$$
L^p(A,\varphi) = \bigl\{
y\in L^0(\R,\tau_0)\, :\, \widehat{\sigma}^\varphi_r(y) 
=e^{-\frac{r}{p}}y\ \hbox{for all}\ r\in\Rdb\bigr\}.
$$
In particular, $L^\infty(A,\varphi)=A\subset\R$.
For all $1\leq p<\infty$,
$L^p(A,\varphi)^+= L^p(A,\varphi)\cap L^0(\R,\tau_0)^+$
is a positive cone on $L^p(A,\varphi)$.

For any normal semi-finite weight $\gamma$ on $A$, let 
$\widehat{\gamma}$ denote the dual weight on $\R$, see e.g.
\cite[Section 19]{S} or \cite[Section 10.1]{Tak2}. Let $h_\gamma$
denote the Radon-Nikodym derivative of $\gamma$ with respect to
$\tau_0$. This is the unique positive operator on $L^2(\Rdb;H)$
affiliated with  $\R$, such that $\widehat{\gamma}(y)=
\tau_0(h_\gamma^\frac12 yh_\gamma^\frac12),$ for all
$y\in\R^+$. It turns out that $h_\gamma$ belongs to
$L^1(A,\varphi)^+$ if and only if 
$\gamma\in A_{*}^+$. Further the mapping
$\gamma\mapsto h_\gamma$ on $A_{*}^+$ extends to a linear isomorphism
from $A_*$ onto $L^1(A,\varphi)$, still denoted by 
$\gamma\mapsto h_\gamma$. With this isomorphism in hands, we
equip $L^1(A,\varphi)$  with the norm 
$\norm{\,\cdotp}_1$ inherited from $A_*$, that is,
$\norm{h_\gamma}_{1}=\norm{\gamma}_{A_*}$ for all $\gamma\in A_*$.
Next, for any $1\leq p<\infty$ and any $y\in L^p(A,\varphi)$, the positive 
operator $\vert y\vert$ belongs to $L^p(A,\varphi)$ as well 
and hence $\vert y\vert^p$ belongs to $L^1(A,\varphi)$. This allows 
to define $\norm{y}_p=\norm{\vert y\vert^p}^{\frac{1}{p}}$ for all
$y\in L^p(A,\varphi)$. Then $\norm{\,\cdotp}_p$ is a complete norm on 
$L^p(A,\varphi)$.

The Banach spaces $L^p(A,\varphi)$ 
satisfy the following version of H\"older's inequality
(see e.g. \cite[Proposition 9.17]{Hiai}).
For any $1\leq p,p'\leq \infty$ 
such that $p^{-1}+p'^{-1}=1$ and for any
$x\in L^{p}(A,\varphi)$ and 
$y\in L^{p'}(A,\varphi)$, the product $xy$ 
belongs to $L^1(A,\varphi)$ and we have
$$
\norm{xy}_1\leq
\norm{x}_{p}\norm{y}_{p'}.
$$

We let ${\rm Tr}_\varphi\colon L^1(A,\varphi)\to \Cdb$ be defined by
${\rm Tr}_\varphi(h_\gamma)=\gamma(1)$ for all $\gamma\in A_*$.
This functional  has two remarkable properties. First,  for any
$x\in L^{p}(A,\varphi)$ and 
$y\in L^{p'}(A,\varphi)$ with $p^{-1}+p'^{-1}=1$, we have
${\rm Tr}_\varphi(xy) = {\rm Tr}_\varphi(yx).$ Second,
$$
{\rm Tr}_\varphi(h_\gamma x) = \gamma(x),
$$
for all $\gamma\in A_*$ and $x\in A$.
It follows from this identity and the definition of $\norm{\,\cdotp}_1$ 
that the duality pairing 
$\langle x,y\rangle ={\rm Tr}_\varphi(xy)$ for $x\in A$ and $y\in L^1(A,\varphi)$
yields an isometric isomorphism
\begin{equation}\label{2Duality}
L^1(A,\varphi)^*\simeq A.
\end{equation}
We also note that $L^2(A,\varphi)$ is a Hilbert space for the inner product
\begin{equation}\label{2Inner}
(x\vert y)_{L^2(A)}={\rm Tr}_\varphi(y^*x),\qquad x,y\in L^2(A,\varphi).
\end{equation}

We let $D_\varphi=h_\varphi$ be the Radon-Nikodym derivative
of $\varphi$ with respect to $\tau_0$. This operator
is called the density of $\varphi$. Note that it belongs to $L^1(A,\varphi)$
if and only if $\varphi$ is finite. 
It was shown in \cite[Section 2]{GL} that for any $1\leq p<\infty$,
$D_\varphi^{\frac{1}{2p}} x D_\varphi^{\frac{1}{2p}}$ belongs to 
$L^p(A,\varphi)$ for all $x\in \M_\varphi$, and that 
$D_\varphi^{\frac{1}{2p}} \M_\varphi D_\varphi^{\frac{1}{2p}}$
is dense in $L^p(A,\varphi)$. 
In the case $p=1$, we further have
\begin{equation}\label{2Tr-Varphi}
\varphi(x)={\rm Tr}_\varphi\bigl(D_\varphi^\frac12 x
D_\varphi^\frac12\bigr),
\qquad x\in\M_\varphi,
\end{equation}
see \cite[Proposition 2.13]{GL}.
We refer to \cite[Section 1]{GL} for the precise meaning of the above products.

\smallskip
\subsection{The Haagerup-Junge-Xu extension property and $p$-dilations}\label{23}
Consider two weighted von Neumann algebras  $(A,\varphi)$
and $(A',\varphi')$, and let 
$V\colon A\to  A'$ be any weight-preserving positive  operator. 
If $x\in\N_\varphi$, then $\varphi'(V(x^*x))=\varphi(x^*x)<\infty$,
hence $V(x^*x)\in\M_{\varphi'}$. Consequently,
$V$ maps $\M_\varphi$ into $\M_{\varphi'}$. Hence for any
$1\leq p<\infty$, one may define
$$
V_p\colon D_\varphi^{\frac{1}{2p}} \M_\varphi
D_\varphi^{\frac{1}{2p}}\longrightarrow L^p(A',\varphi'),
\qquad V_p\bigl(D_\varphi^{\frac{1}{2p}} x
D_\varphi^{\frac{1}{2p}}\bigr) =D_\psi^{\frac{1}{2p}} V(x)
D_\psi^{\frac{1}{2p}},\quad \hbox{for all}\ x\in \M_\varphi.
$$
The following extension result 
is due to Haageup-Junge-Xu.

\begin{proposition}\label{2Haag} \cite[Proposition 5.4 $\&$ Remark 5.6]{HJX}
For any weight-preserving positive map $V\colon (A,\varphi)\to  (A',\varphi')$ 
and any $1\leq p<\infty$, 
$V_p$ extends to a (necessarily unique)
bounded map
from $L^p(A,\varphi)$ into $L^p(A',\varphi')$. If further 
$\norm{V}\leq 1$, then we have
$$
\bignorm{V_p\colon L^p(A,\varphi)
\longrightarrow L^p(A',\varphi')}\leq 1,\qquad 1\leq p<\infty.
$$
\end{proposition}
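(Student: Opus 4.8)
The plan is to obtain $V_p$ by complex interpolation between the two endpoints $p=\infty$ and $p=1$, where the map can be analyzed by hand, exploiting that the scale $\bigl(L^p(A,\varphi)\bigr)_{1\le p\le\infty}$ is an interpolation scale compatible with the symmetric density embedding $x\mapsto D_\varphi^{\frac12}xD_\varphi^{\frac12}$ that underlies the very definition of $V_p$. At $p=\infty$ the map $V_\infty$ is just $V\colon A\to A'$, which is bounded (a positive map on a unital $C^*$-algebra satisfies $\norm{V}=\norm{V(1)}$) and is a contraction as soon as $\norm{V}\le1$. So the first genuine task is the endpoint $p=1$.

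For $p=1$ I would first treat positive elements. If $x\in\M_\varphi^+$ then $\xi=D_\varphi^{\frac12}xD_\varphi^{\frac12}\in L^1(A,\varphi)^+$ and, since $V$ is positive and weight-preserving, $V_1(\xi)=D_{\varphi'}^{\frac12}V(x)D_{\varphi'}^{\frac12}\ge0$ with
$$
\norm{V_1(\xi)}_1={\rm Tr}_{\varphi'}\bigl(D_{\varphi'}^{\frac12}V(x)D_{\varphi'}^{\frac12}\bigr)=\varphi'\bigl(V(x)\bigr)=\varphi(x)=\norm{\xi}_1,
$$
using \eqref{2Tr-Varphi} on both $(A,\varphi)$ and $(A',\varphi')$ and the fact that the $L^1$-norm of a positive element is its trace. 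Thus $V_1$ preserves the norm on the positive cone. Since $V$ is $*$-preserving, $V_1$ sends self-adjoint elements to self-adjoint elements, so for self-adjoint $\xi$ the decomposition $\xi=\xi_+-\xi_-$ into positive and negative parts (with orthogonal supports, whence $\norm{\xi}_1=\norm{\xi_+}_1+\norm{\xi_-}_1$) gives $\norm{V_1(\xi)}_1\le\norm{\xi}_1$. Passing from the self-adjoint elements to all of $L^1$ with no loss of constant is the one place where bare positivity is not quite enough; I would remove the offending factor by the matrix amplification $V\otimes\mathrm{id}_{M_2}$ on $\bigl(M_2(A),\mathrm{tr}\otimes\varphi\bigr)$ applied to $\left(\begin{smallmatrix}0&\xi\\\xi^*&0\end{smallmatrix}\right)$ (legitimate since the multipliers of interest are completely positive), or, if only boundedness is sought, simply retain the harmless constant, which already yields that each $V_p$ is bounded after interpolation.

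With both endpoints in hand, the conclusion follows from the complex interpolation theorem for Haagerup spaces: with respect to the symmetric embedding, $\bigl[A,L^1(A,\varphi)\bigr]_{1/p}\cong L^p(A,\varphi)$ isometrically, and likewise for $(A',\varphi')$. Because $V_\infty$ and $V_1$ are both induced by $V$ on the common domain $\M_\varphi$, they are consistent on the couple $\bigl(A,L^1(A,\varphi)\bigr)$, so $V_p$ is precisely the interpolated operator, whence $\norm{V_p}\le\norm{V_\infty}^{1-1/p}\norm{V_1}^{1/p}$; this is $\le1$ when $\norm{V}\le1$, and finite in general. Uniqueness of the extension is immediate from the density of $D_\varphi^{\frac1{2p}}\M_\varphi D_\varphi^{\frac1{2p}}$ in $L^p(A,\varphi)$. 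The main obstacle, and where the non-tracial structure truly enters, is the bookkeeping needed to verify that the densely defined endpoint maps are genuinely compatible with the interpolation embeddings: since $D_\varphi$ does not commute with elements of $A$, one cannot freely rearrange products $D_\varphi^{s}xD_\varphi^{t}$, and the identities relating $V_1$, $V_\infty$ and the pairing ${\rm Tr}_{\varphi'}\bigl(V_p(\cdot)\,\cdot\bigr)$ must be checked using Hölder's inequality together with \eqref{2Tr-Varphi} and the duality \eqref{2Duality}, rather than by the formal cyclic manipulations that would be available in the tracial case.
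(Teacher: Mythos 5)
Your interpolation skeleton (endpoints $p=\infty$ and $p=1$, then Terp's symmetric-embedding identification $[A,L^1(A,\varphi)]_{1/p}\simeq L^p(A,\varphi)$) is a reasonable frame, and indeed the paper itself uses exactly this compatibility in Section 5 via \eqref{5Terp}; note also that the paper does not reprove the proposition at all but cites \cite{HJX}, so the comparison is with that proof, whose real content is concentrated at $p=1$ -- precisely where your argument breaks. For self-adjoint $\xi=D_\varphi^{1/2}xD_\varphi^{1/2}$ in the domain, the Jordan parts $\xi_\pm$ taken in $L^1(A,\varphi)\simeq A_*$ need not belong to $D_\varphi^{1/2}\M_\varphi D_\varphi^{1/2}$, so you cannot apply $V_1$ to them. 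If instead you decompose inside the domain, $\xi=D_\varphi^{1/2}x_+D_\varphi^{1/2}-D_\varphi^{1/2}x_-D_\varphi^{1/2}$, positivity and weight-preservation only give $\norm{V_1(\xi)}_1\leq\varphi(x_+)+\varphi(x_-)=\varphi(\vert x\vert)$, and in the non-tracial case $\varphi(\vert x\vert)$ is \emph{not} dominated by any constant times $\norm{D_\varphi^{1/2}xD_\varphi^{1/2}}_1$: already for $A=M_2$ with the state of density $d=\mathrm{diag}(1,\varepsilon)/(1+\varepsilon)$ and $x=\bigl(\begin{smallmatrix}0&1\\1&0\end{smallmatrix}\bigr)$ one has $\varphi(\vert x\vert)=1$ while $\norm{d^{1/2}xd^{1/2}}_1=2\sqrt{\varepsilon}/(1+\varepsilon)\to0$. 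So your method yields no bound at $p=1$ -- not even with a constant $2$ -- and the ``harmless constant'' fallback is unavailable. The known proofs proceed instead through the duality \eqref{2Duality}: one shows, using analytic elements and the $\sigma^\varphi_{-i/2}$-twisted identities of Lemma \ref{2GL2} (exactly the ``bookkeeping'' your final paragraph defers rather than performs), that the adjoint $V_1^*\colon A'\to A$ is a positive weight-preserving map, whence $\norm{V_1}=\norm{V_1^*}=\norm{V_1^*(1)}\leq\norm{V}$ by Russo--Dye for positive maps. The paper's Remark \ref{2Xu} explicitly flags this as the delicate point: it is proved in \cite{HJX} for finite weights, and the general weight case rests on an unpublished argument of Xu \cite{X} using $\N_\varphi^\infty$.

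A second, independent gap: your device for removing the constant -- amplifying to $V\otimes\mathrm{id}_{M_2}$ -- requires $2$-positivity, whereas the proposition assumes only that $V$ is positive; your parenthetical appeal to the multipliers being completely positive would prove a strictly weaker statement than the one claimed. The correct repair is the one just described (positivity of the extended $V_1$, then Russo--Dye on its adjoint), but establishing that positivity is again the same hard dual step, so it cannot be treated as an afterthought. Your $p=\infty$ endpoint, the uniqueness-from-density remark, and the consistency of the two endpoints through Terp's couple are all fine as stated (for a genuine weight one must use Terp's generalized compatible couple, since $A$ does not embed into $L^1(A,\varphi)$), but none of this compensates for the missing $p=1$ bound.
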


\begin{remark}\label{2Iso}
Consider $V\colon (A,\varphi)\to (A',\varphi')$ as in Proposition \ref{2Haag} 
and assume that
$V$ is an isometric isomorphism. Then for all
$1\leq p<\infty$,  
$V_p\colon L^p(A,\varphi)
\to L^p(A',\varphi')$ is an isometric isomorphism as well. 
To prove this, it suffices to apply Proposition \ref{2Haag} to $V$ and to $V^{-1}$, 
and to observe that $\{V^{-1}\}_p=\{V_p\}^{-1}$.
\end{remark}

\begin{definition}\label{1DefLpDil} 
Let $(A,\varphi)$ be a weighted von Neumann algebra
and let $T\colon L^p(A,\varphi)
\to L^p(A,\varphi)$ be a contraction. 
We say that $T$ admits
an isometric $p$-dilation if there exists another
weighted von Neumann algebra $(M,\psi)$, an 
isometric isomorphism $U\colon L^p(M,\psi)\to
L^p(M,\psi)$ and two contractions 
$J\colon L^p(A,\varphi)\to
L^p(M,\psi)$ and $Q\colon L^p(M,\psi)\to
L^p(A,\varphi)$ such that
$$ 
\forall\, k\geq 0,\qquad T^k = QU^kJ.
$$ 
\end{definition}

The following is a straightforward 
application of   Proposition \ref{2Haag} and Remark \ref{2Iso}. It 
extends \cite[Proposition 2.3]{DL} which was 
concerned by the tracial case only (see also \cite[p. 2281]{Ar3} for the state case).

\begin{proposition}\label{2LpDil}
Let $T\colon (A,\varphi)\to (A,\varphi)$ be a positive
weight-preserving contraction. For 
any $1\leq p<\infty$, 
let $T_p\colon L^p(A,\varphi)\to L^p(A,\varphi)$
be the contraction provided by 
Proposition \ref{2Haag}.
If $T$ admits an
absolute dilation, then $T_p$ admits an isometric
$p$-dilation.
\end{proposition}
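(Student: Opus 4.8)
The plan is to transport the algebraic dilation identity \eqref{1Formula} to the $L^p$-level through the functor $V\mapsto V_p$ of Proposition \ref{2Haag}, using the data $(M,\psi)$, $J$ and $U$ furnished by the absolute dilation of $T$. First I would assemble the building blocks. Since $J\colon A\to M$ is unital, positive and weight-preserving with $\norm{J}=1$, Proposition \ref{2Haag} provides a contraction $J_p\colon L^p(A,\varphi)\to L^p(M,\psi)$. The conditional expectation $\Edb_J\colon M\to A$ is positive, weight-preserving and satisfies $\norm{\Edb_J}=1$, so Proposition \ref{2Haag} yields a contraction $(\Edb_J)_p\colon L^p(M,\psi)\to L^p(A,\varphi)$. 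Finally $U\colon M\to M$ is a weight-preserving $*$-automorphism, hence a weight-preserving isometric isomorphism whose inverse is again such a map; Remark \ref{2Iso} then shows that $U_p\colon L^p(M,\psi)\to L^p(M,\psi)$ is an isometric isomorphism.

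The crucial step is the functoriality of the assignment $V\mapsto V_p$: whenever $W\colon(A,\varphi)\to(A',\varphi')$ and $V\colon(A',\varphi')\to(A'',\varphi'')$ are weight-preserving positive maps, one has $(V\circ W)_p=V_p\circ W_p$. I would prove this by evaluating both sides on the dense subspace $D_\varphi^{\frac{1}{2p}}\M_\varphi D_\varphi^{\frac{1}{2p}}$. For $x\in\M_\varphi$ the element $W(x)$ lies in $\M_{\varphi'}$ (as recalled at the start of Subsection \ref{23}), so the defining formulas for the $p$-realizations give
$$
V_p\bigl(W_p\bigl(D_\varphi^{\frac{1}{2p}} x D_\varphi^{\frac{1}{2p}}\bigr)\bigr)=V_p\bigl(D_{\varphi'}^{\frac{1}{2p}} W(x) D_{\varphi'}^{\frac{1}{2p}}\bigr)=D_{\varphi''}^{\frac{1}{2p}} V(W(x)) D_{\varphi''}^{\frac{1}{2p}}=(V\circ W)_p\bigl(D_\varphi^{\frac{1}{2p}} x D_\varphi^{\frac{1}{2p}}\bigr).
$$
As $V_p\circ W_p$ and $(V\circ W)_p$ are bounded maps agreeing on a dense subspace, the uniqueness clause of Proposition \ref{2Haag} forces $V_p\circ W_p=(V\circ W)_p$. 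The same computation with $W=I_A$ gives $(I_A)_p=I_{L^p(A,\varphi)}$.

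With functoriality established the conclusion follows at once. Iterating the composition rule yields $(U^k)_p=(U_p)^k$, and applying the rule to the identity \eqref{1Formula} gives
$$
(T_p)^k=(T^k)_p=(\Edb_J\circ U^k\circ J)_p=(\Edb_J)_p\,(U_p)^k\,J_p,\qquad k\geq 0.
$$
Taking $Q=(\Edb_J)_p$, this is precisely the isometric $p$-dilation of $T_p$ required by Definition \ref{1DefLpDil}: $J_p$ and $Q$ are contractions and $U_p$ is an isometric isomorphism. I expect the one point needing care to be the functoriality statement, and within it the verification that the intermediate image $W(x)$ genuinely lands in $\M_{\varphi'}$ so that the two-step evaluation is legitimate; once this is in place, the uniqueness in Proposition \ref{2Haag} does the rest, and no non-commutative $L^p$ estimates beyond those already recorded are needed.
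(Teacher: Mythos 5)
Your proof is correct and takes essentially the same route as the paper, which proves Proposition \ref{2LpDil} precisely as a straightforward application of Proposition \ref{2Haag} and Remark \ref{2Iso}; your write-up merely makes explicit the routine details left implicit there, namely the multiplicativity $(V\circ W)_p=V_p\circ W_p$ checked on the dense subspace $D_\varphi^{\frac{1}{2p}}\M_\varphi D_\varphi^{\frac{1}{2p}}$, which is legitimate because $V(\M_\varphi)\subset\M_{\varphi'}$ as recorded at the start of Subsection \ref{23}. No gap remains.
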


\smallskip
\subsection{The use of analytic elements}\label{24}
Let $(A,\varphi)$ be a weighted von Neumann algebra.
We let $A_a\subset A$ be the subset of
all $x\in A$ such that $r\mapsto \sigma^\varphi_r(x)$ extends to an entire
function $z\in\Cdb\mapsto \sigma_z^\varphi(x) \in A$. Such
elements are called analytic. They play a key role in the study of 
absolute dilations. Following \cite[p.46]{GL}, we introduce
$$
\N_\varphi^\infty = \bigl\{x\in A_a\, :\, \sigma_z^\varphi(x)\in 
\N_\varphi\cap \N_\varphi^*\ \, \hbox{for all}\  z\in\Cdb\bigr\}
$$
and
$$
\M_\varphi^\infty ={\rm Span}\bigl\{y^*x\, :\, x,y\in \N_\varphi^\infty
\bigr\}.
$$
It is easy to check that $\M^\infty_\varphi\subset \N^\infty_\varphi$. 
According to  \cite[Theorem 2.4,(a)]{GL}, $\M_\varphi^\infty$
is a $w^*$-dense 
$*$-subalgebra of $A$ and by \cite[Theorem 2.4,(b)]{GL}, we have the following.

\begin{lemma}\label{2GL0} 
The space $D_\varphi^\frac12\M^\infty_\varphi D_\varphi^\frac12
\subset L^1(A,\varphi)$
is dense in $L^1(A,\varphi)$.
\end{lemma}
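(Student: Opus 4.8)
The plan is to upgrade the density of $D_\varphi^\frac12\M_\varphi D_\varphi^\frac12$ in $L^1(A,\varphi)$, recalled in Subsection~\ref{22}, to density of the smaller space $D_\varphi^\frac12\M^\infty_\varphi D_\varphi^\frac12$. Since $\M_\varphi={\rm Span}\{x^*x:x\in\N_\varphi\}$, every element of $D_\varphi^\frac12\M_\varphi D_\varphi^\frac12$ is a finite linear combination of generators $D_\varphi^\frac12 x^*x D_\varphi^\frac12$ with $x\in\N_\varphi$, so it suffices to approximate each such generator in $L^1(A,\varphi)$ by elements of $D_\varphi^\frac12\M^\infty_\varphi D_\varphi^\frac12$.

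First I would regularise $x$ by the modular group. For $x\in\N_\varphi$ and $n\geq 1$, set
$$
x_n=\sqrt{\tfrac{n}{\pi}}\int_\Rdb e^{-nt^2}\,\sigma_t^\varphi(x)\,dt,
$$
the integral being taken first in the $w^*$-topology, so that $x_n\in A$ is analytic. The crucial point is to check that in fact $x_n\in\N_\varphi^\infty$. Using the $L^2$-realisation $z\mapsto zD_\varphi^\frac12$ of $\N_\varphi$ and the invariance of $\varphi$, hence of $\N_\varphi$, under $\sigma_t^\varphi$, one has $\sigma_t^\varphi(x)D_\varphi^\frac12=U_t\bigl(xD_\varphi^\frac12\bigr)$, where $(U_t)_{t\in\Rdb}$ is the strongly continuous unitary group implementing the modular flow on $L^2(A,\varphi)$. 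Thus the defining integral converges in $L^2(A,\varphi)$, and closedness of the $L^2$-realisation gives $x_n\in\N_\varphi$. Applying the same reasoning to the entire extensions $\sigma_z^\varphi(x_n)$, whose Gaussian weight $e^{-n(t-z)^2}$ stays integrable for every $z\in\Cdb$, yields $\sigma_z^\varphi(x_n)\in\N_\varphi\cap\N_\varphi^*$ for all $z$, so that $x_n\in\N_\varphi^\infty$. In particular $x_n^*x_n\in\M_\varphi^\infty$, whence $D_\varphi^\frac12 x_n^*x_n D_\varphi^\frac12\in D_\varphi^\frac12\M^\infty_\varphi D_\varphi^\frac12$.

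It then remains to pass to the limit. By strong continuity of $(U_t)$ and the fact that $\sqrt{n/\pi}\,e^{-nt^2}\,dt$ is an approximate identity, I would show $x_nD_\varphi^\frac12\to xD_\varphi^\frac12$ in $L^2(A,\varphi)$; applying the (isometric) involution gives $D_\varphi^\frac12 x_n^*=\bigl(x_nD_\varphi^\frac12\bigr)^*\to D_\varphi^\frac12 x^*$ in $L^2(A,\varphi)$ as well. Writing $D_\varphi^\frac12 x_n^*x_nD_\varphi^\frac12=\bigl(D_\varphi^\frac12 x_n^*\bigr)\bigl(x_nD_\varphi^\frac12\bigr)$ and invoking the joint continuity of the product $L^2(A,\varphi)\times L^2(A,\varphi)\to L^1(A,\varphi)$ furnished by Hölder's inequality (Subsection~\ref{22}), I conclude $D_\varphi^\frac12 x_n^*x_nD_\varphi^\frac12\to D_\varphi^\frac12 x^*xD_\varphi^\frac12$ in $L^1(A,\varphi)$. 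Combined with the density of $D_\varphi^\frac12\M_\varphi D_\varphi^\frac12$, this establishes the claim.

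The main obstacle will be the verification, in the second step, that the Gaussian averages land in $\N_\varphi^\infty$ and not merely in $\N_\varphi$: one must justify the $L^2$-valued integral, its compatibility with the $L^2$-realisation of $\N_\varphi$, and, above all, that the analytic continuations $\sigma_z^\varphi(x_n)$ remain in $\N_\varphi\cap\N_\varphi^*$ for every complex $z$. This last point is exactly where a Gaussian weight, rather than a mere truncation of the flow, is needed. The remaining steps are a routine approximate-identity argument together with Hölder's inequality.
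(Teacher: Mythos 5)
The paper itself gives no argument for this lemma: it is quoted directly from Goldstein--Lindsay \cite[Theorem 2.4,(b)]{GL}, whose proof is precisely the Gaussian smearing you propose. So your plan reconstructs the cited argument, and its architecture is sound: reduce, via $\M_\varphi={\rm Span}\{x^*x\,:\,x\in\N_\varphi\}$ and the density of $D_\varphi^{\frac12}\M_\varphi D_\varphi^{\frac12}$ recalled in Subsection \ref{22}, to the generators $D_\varphi^{\frac12}x^*xD_\varphi^{\frac12}$; smear $x$ by the modular flow; pass to the limit using the isometric involution on $L^2(A,\varphi)$ and the bilinear continuity $L^2\times L^2\to L^1$ from H\"older's inequality. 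Those first and last steps are fine as written (modulo routine \cite[Section 1]{GL} manipulations identifying the $L^0$-product $\bigl(D_\varphi^{\frac12}x_n^*\bigr)\bigl(x_nD_\varphi^{\frac12}\bigr)$ with the element $D_\varphi^{\frac12}x_n^*x_nD_\varphi^{\frac12}$).

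The genuine gap sits exactly at the point you flag but do not close: the claim that ``applying the same reasoning'' to $\sigma_z^\varphi(x_n)$ yields membership in $\N_\varphi\cap\N_\varphi^*$. The closedness-plus-Gaussian-integrability argument gives only $\sigma_z^\varphi(x_n)\in\N_\varphi$ for all $z\in\Cdb$. For the $\N_\varphi^*$ half one needs $\sigma_z^\varphi(x_n)^*=\sigma_{\bar z}^\varphi(x_n^*)\in\N_\varphi$, and $x_n^*$ is the Gaussian smear of $x^*$, which need \emph{not} lie in $\N_\varphi$ when $x$ is a generic element of $\N_\varphi$; hence the $L^2$-realization $x^*D_\varphi^{\frac12}$ is unavailable and your integral argument cannot even start on that side. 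The conclusion is nevertheless true, but it requires spectral rather than integrability information: writing $\Lambda(y)=yD_\varphi^{\frac12}$ and letting $\Delta$ denote the modular operator on $L^2(A,\varphi)$ (so that $\Delta^{it}\xi=D_\varphi^{it}\xi D_\varphi^{-it}$), the smear satisfies $\Lambda\bigl(\sigma_z^\varphi(x_n)\bigr)=e^{iz\log\Delta}\,e^{-(\log\Delta)^2/4n}\Lambda(x)$, which lies in $\mathrm{dom}\bigl(\Delta^{\frac12}\bigr)$ for every $z\in\Cdb$. One must then invoke the standard but nontrivial fact from the theory of achieved left Hilbert algebras: if $y\in\N_\varphi$ and $\Lambda(y)\in\mathrm{dom}\bigl(\Delta^{\frac12}\bigr)$, then $y^*\in\N_\varphi$ and $\Lambda(y^*)=J\Delta^{\frac12}\Lambda(y)$ (see \cite[Chapter VI]{Tak2}); equivalently, every left-bounded vector in $\mathrm{dom}\bigl(\Delta^{\frac12}\bigr)$ belongs to the achieved left Hilbert algebra. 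With this lemma inserted at the place you yourself label ``the main obstacle'', your proof is complete and coincides in substance with the argument underlying \cite[Section 2]{GL}.
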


The following commutation property is given by \cite[Lemma 2.5]{GL}.

\begin{lemma}\label{2GL1}
For all $x\in\M_\varphi^\infty$, we have $x D_\varphi^\frac14 = 
D_\varphi^\frac14\sigma_{\frac{i}{4}}^{\varphi}(x).$
\end{lemma}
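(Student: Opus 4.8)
The plan is to obtain the identity by analytically continuing, from the real axis to the purely imaginary value $i/4$, the basic fact that the unitaries $D_\varphi^{it}$ implement the modular group on $A$ inside the crossed product $\R$. First I would recall from the Haagerup construction (see \cite{H}, \cite{Terp}, or \cite[Chapter 9]{Hiai}) that each $D_\varphi^{it}$ is a unitary of $\R$ and that, with the normalization of these references,
\[
D_\varphi^{it}\,y\,D_\varphi^{-it}=\sigma^\varphi_t(y),\qquad y\in A,\ t\in\Rdb.
\]
This is compatible with $\widehat\sigma^\varphi_r(D_\varphi)=e^{-r}D_\varphi$, which in particular forces $D_\varphi^{1/4}\in L^4(A,\varphi)$. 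Rewriting the displayed relation as $D_\varphi^{it}y=\sigma^\varphi_t(y)D_\varphi^{it}$ and formally setting $t=i/4$ gives $D_\varphi^{-1/4}x=\sigma^\varphi_{i/4}(x)D_\varphi^{-1/4}$; multiplying on the left and on the right by $D_\varphi^{1/4}$ turns this into $xD_\varphi^{1/4}=D_\varphi^{1/4}\sigma^\varphi_{i/4}(x)$, which is exactly the assertion. So the entire content is to make this continuation rigorous, both sides being read as elements of $L^4(A,\varphi)\subset L^0(\R,\tau_0)$.

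Before doing so I would record that for $x\in\M_\varphi^\infty$ both members genuinely lie in $L^4(A,\varphi)$. Indeed $x$ and $\sigma^\varphi_{i/4}(x)$ lie in $\N_\varphi^\infty\subset\N_\varphi\cap\N_\varphi^*$, so the one-sided products $xD_\varphi^{1/4}$ and $D_\varphi^{1/4}\sigma^\varphi_{i/4}(x)$ are $\tau_0$-measurable, satisfy $\widehat\sigma^\varphi_r(\,\cdot\,)=e^{-r/4}(\,\cdot\,)$, and are $L^4$-integrable by the mapping properties of $D_\varphi^{1/2p}\M_\varphi D_\varphi^{1/2p}$ established in \cite[Section 2]{GL}. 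Hence it suffices to prove equality in $L^0(\R,\tau_0)$, and for that I would test against the Hilbert space $L^2(A,\varphi)$ from \eqref{2Inner}.

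For the continuation itself I would work with matrix coefficients and a spectral cut-off. Writing $e_n=\mathbf 1_{[1/n,n]}(D_\varphi)$, the operator $D_\varphi^{iz}e_n$ is a bounded element of $\R$ that is entire in $z$ (because $\log D_\varphi$ is bounded on $\mathrm{ran}\,e_n$), so for $\zeta,\zeta'$ in a suitable dense analytic subspace of $L^2(A,\varphi)$ the functions $z\mapsto\langle D_\varphi^{iz}e_n\,x\,e_nD_\varphi^{-iz}\zeta,\zeta'\rangle$ are entire and converge on $\Rdb$ to $\langle\sigma^\varphi_t(x)\zeta,\zeta'\rangle$. The decisive point is a bound uniform in $n$ on the closed strip $0\le\mathrm{Im}\,z\le1/4$; this is where I would use that $x$ is analytic, so that $z\mapsto\sigma^\varphi_z(x)$ is bounded on that strip (its norm equals $\|x\|$ on $\Rdb$ and $\|\sigma^\varphi_{i/4}(x)\|$ on the upper edge, since $\sigma^\varphi_t$ is isometric and $\sigma^\varphi_{t+i/4}=\sigma^\varphi_t\circ\sigma^\varphi_{i/4}$), together with the $L^2$-integrability built into $\N_\varphi^\infty$ to absorb the unbounded powers $D_\varphi^{\pm iz}$ into $\zeta,\zeta'$. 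Given such a bound, the Vitali--Porter theorem (or the identity theorem for the limiting bounded analytic function on the strip) yields $\langle D_\varphi^{-1/4}xD_\varphi^{1/4}\zeta,\zeta'\rangle=\langle\sigma^\varphi_{i/4}(x)\zeta,\zeta'\rangle$ at $z=i/4$; letting $\zeta,\zeta'$ run over a dense set gives $D_\varphi^{-1/4}xD_\varphi^{1/4}=\sigma^\varphi_{i/4}(x)$, and the rearrangement from the first paragraph finishes the proof.

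I expect the main obstacle to be precisely the bookkeeping of the unbounded operators $D_\varphi^{\pm1/4}$ in this continuation. The delicate tasks are to choose the test vectors $\zeta,\zeta'$ (for instance in $D_\varphi^{1/4}\M_\varphi^\infty D_\varphi^{1/4}$, which is dense by the argument behind Lemma \ref{2GL0}) so that the total power of $D_\varphi$ stays equal to $1/2$ and the truncated matrix coefficients stay uniformly bounded up to the edge $z=i/4$; to justify passage to the limit in the measure topology rather than merely weakly; and to check that the limiting product $D_\varphi^{-1/4}xD_\varphi^{1/4}$ is a genuine bounded operator. Each of these is made possible by the hypothesis $x\in\M_\varphi^\infty\subset\N_\varphi^\infty$, which is exactly the analyticity-plus-integrability condition under which such KMS-type commutation relations hold.
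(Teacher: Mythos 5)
The paper itself offers no proof of this lemma: it is imported verbatim from \cite[Lemma 2.5]{GL}, so what you are really doing is re-proving Goldstein--Lindsay, and your overall mechanism (analytic continuation of $D_\varphi^{it}yD_\varphi^{-it}=\sigma^\varphi_t(y)$ to $t=i/4$, exploiting entireness of $z\mapsto\sigma^\varphi_z(x)$) is indeed the standard route for such KMS-type commutation identities. However, two of your supporting claims are wrong and your decisive step fails as set up. First, $D_\varphi^{1/4}\in L^4(A,\varphi)$ is false unless $\varphi$ is finite: the scaling $\widehat{\sigma}^\varphi_r(D_\varphi^{1/4})=e^{-r/4}D_\varphi^{1/4}$ holds, but membership in $L^4$ also requires $\tau_0$-measurability, and since $h_\gamma\in L^1(A,\varphi)^+$ iff $\gamma\in A_*^+$, measurability of $D_\varphi^{1/4}$ would force $\varphi\in A_*^+$; in the application at hand $\varphi_G$ is not finite (unless $G$ is discrete), which is precisely the regime of this paper. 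The one-sided products $xD_\varphi^{1/4}$ and $D_\varphi^{1/4}\sigma^\varphi_{i/4}(x)$ \emph{do} lie in $L^4(A,\varphi)$, but your justification via the two-sided inclusion $D_\varphi^{1/8}\M_\varphi D_\varphi^{1/8}\subset L^4$ is circular, since moving a one-sided power of $D_\varphi$ into symmetric position is exactly the commutation relation being proved; a non-circular argument is, e.g., $\vert xD_\varphi^{1/4}\vert^2=D_\varphi^{1/4}x^*xD_\varphi^{1/4}\in L^2(A,\varphi)$ because $x^*x\in\M_\varphi$.

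Second, and more seriously, the Vitali step does not close. With $e_n=\mathbf{1}_{[1/n,n]}(D_\varphi)$ one only has $\Vert D_\varphi^{iz}e_n\Vert\leq n^{1/4}$ on the strip $0\leq \mathrm{Im}\,z\leq \frac14$, so the truncated matrix coefficients of $D_\varphi^{iz}e_nxe_nD_\varphi^{-iz}$ admit no bound uniform in $n$, and Vitali--Porter does not apply as formulated; moreover, boundedness of $z\mapsto\sigma^\varphi_z(x)$ on the \emph{closed} strip is not automatic from boundedness on the two edges (Phragm\'en--Lindel\"of requires an a priori growth hypothesis that entireness alone does not provide). Your test vectors are also mismatched: for $\zeta,\zeta'\in D_\varphi^{1/4}\M_\varphi^\infty D_\varphi^{1/4}\subset L^2(A,\varphi)$, the element $(xD_\varphi^{1/4})\zeta$ lies in $L^{4/3}(A,\varphi)$ and cannot be paired against $\zeta'\in L^2$ by ${\rm Tr}_\varphi$ (the H\"older exponents sum to $\frac54$, not $1$); one would need test elements in $L^{4/3}$, e.g. $D_\varphi^{3/8}\M^\infty_\varphi D_\varphi^{3/8}$. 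The repair is to put the truncation on the vectors rather than around $x$, after which no uniform bound is needed at all: fix $\zeta\in e_mL^2(\Rdb;H)$ and $\eta\in e_{m'}L^2(\Rdb;H)$, and set $F(z)=\langle xD_\varphi^{-iz}\zeta,\,D_\varphi^{-i\bar z}D_\varphi^{1/4}\eta\rangle$ and $G(z)=\langle\sigma^\varphi_z(x)\zeta,\,D_\varphi^{1/4}\eta\rangle$. Both are entire ($D_\varphi$ has spectrum in $[1/m,m]$ on ${\rm ran}\,e_m$, and $x$ is analytic), they agree on $\Rdb$ because $D_\varphi^{it}$ implements $\sigma^\varphi_t$, hence they agree everywhere by the identity theorem. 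At $z=i/4$ this gives $\langle xD_\varphi^{1/4}\zeta,\eta\rangle=\langle\sigma^\varphi_{i/4}(x)\zeta,\,D_\varphi^{1/4}\eta\rangle$ for all $\eta$ in a core of $D_\varphi^{1/4}$, whence $\sigma^\varphi_{i/4}(x)\zeta\in{\rm dom}(D_\varphi^{1/4})$ and $xD_\varphi^{1/4}\zeta=D_\varphi^{1/4}\sigma^\varphi_{i/4}(x)\zeta$ on the core $\bigcup_m e_mL^2(\Rdb;H)$; taking closures yields the identity in $L^0(\R,\tau_0)$, hence in $L^4(A,\varphi)$. As your proposal stands, the central estimate is both unjustified and false in the form you need it, so the argument has a genuine gap, even though the strategy is the right one.
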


Next we record a variant of (\ref{2Tr-Varphi}), based on Lemma 
\ref{2GL1}, for which we refer to
\cite[Proposition 2.13]{GL}.

\begin{lemma}\label{2GL2}
For any $x\in \N_\varphi^*$ and any $y\in\M^\infty_\varphi$,
$x\sigma_{-\frac{i}{2}}^{\varphi}(y)$ belongs to $\M_\varphi$ and we have
$$
{\rm Tr}_\varphi\bigl(xD_\varphi^\frac12 y D_\varphi^\frac12\bigr)=\varphi\bigl(x
\sigma_{-\frac{i}{2}}^{\varphi}(y)\bigr).
$$
\end{lemma}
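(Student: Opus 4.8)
The plan is to reduce the asserted identity to the already-recorded formula \eqref{2Tr-Varphi} by transporting one density factor $D_\varphi^{\frac12}$ across $y$ via the commutation relation of Lemma \ref{2GL1}, while keeping every intermediate product inside a genuine Haagerup $L^p$-space. First I would dispose of the membership claim. Since $\M_\varphi^\infty$ is invariant under $\sigma_z^\varphi$ for every $z\in\Cdb$ (immediate from the definition of $\N_\varphi^\infty$, because $\sigma_z^\varphi\circ\sigma_w^\varphi=\sigma_{z+w}^\varphi$), we have $\sigma_{-\frac{i}{2}}^\varphi(y)\in\M_\varphi^\infty\subset\N_\varphi^\infty\subset\N_\varphi$. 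Writing $x=b^*$ with $b\in\N_\varphi$, the product $x\,\sigma_{-\frac{i}{2}}^\varphi(y)=b^*\,\sigma_{-\frac{i}{2}}^\varphi(y)$ is then of the form (element of $\N_\varphi$)$^*\cdot$(element of $\N_\varphi$), hence lies in $\M_\varphi$.

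For the commutation relation I would iterate Lemma \ref{2GL1}. Rewriting that lemma as $D_\varphi^{\frac14}w=\sigma_{-\frac{i}{4}}^\varphi(w)D_\varphi^{\frac14}$ for $w\in\M_\varphi^\infty$ (substitute $\sigma_{-\frac{i}{4}}^\varphi(w)$ for the element there), applying it twice, and using the $\sigma$-invariance of $\M_\varphi^\infty$, I obtain
$$
D_\varphi^{\frac12}\,y=\sigma_{-\frac{i}{2}}^\varphi(y)\,D_\varphi^{\frac12},\qquad y\in\M_\varphi^\infty,
$$
an identity of $\tau_0$-measurable operators; both sides lie in $L^2(A,\varphi)$ because $y,\sigma_{-\frac{i}{2}}^\varphi(y)\in\N_\varphi\cap\N_\varphi^*$.

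The heart of the argument is then a chain of cyclic moves for ${\rm Tr}_\varphi$. Since $y\in\M_\varphi^\infty\subset\M_\varphi$, the element $w:=D_\varphi^{\frac12}yD_\varphi^{\frac12}$ belongs to $L^1(A,\varphi)$, and $x\in A=L^\infty(A,\varphi)$, so $xw\in L^1(A,\varphi)$ and the left-hand side is well defined. Using first the commutativity ${\rm Tr}_\varphi(ab)={\rm Tr}_\varphi(ba)$ for the pairing $L^\infty(A,\varphi)\times L^1(A,\varphi)$ to move $x$ to the right, then the same property for $L^2(A,\varphi)\times L^2(A,\varphi)$ applied to the two square-integrable factors $D_\varphi^{\frac12}y$ and $D_\varphi^{\frac12}x$ (here $x\in\N_\varphi^*$ is exactly what guarantees $D_\varphi^{\frac12}x\in L^2(A,\varphi)$), and finally the commutation relation above, I get
$$
{\rm Tr}_\varphi\bigl(xD_\varphi^{\frac12}yD_\varphi^{\frac12}\bigr)
={\rm Tr}_\varphi\bigl(D_\varphi^{\frac12}yD_\varphi^{\frac12}x\bigr)
={\rm Tr}_\varphi\bigl(D_\varphi^{\frac12}xD_\varphi^{\frac12}y\bigr)
={\rm Tr}_\varphi\bigl(D_\varphi^{\frac12}\,x\sigma_{-\frac{i}{2}}^\varphi(y)\,D_\varphi^{\frac12}\bigr).
$$
Setting $z=x\sigma_{-\frac{i}{2}}^\varphi(y)\in\M_\varphi$ and invoking \eqref{2Tr-Varphi} then yields the right-hand side $\varphi(z)=\varphi\bigl(x\sigma_{-\frac{i}{2}}^\varphi(y)\bigr)$.

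The step I expect to require the most care is the second equality in this chain, and more generally the bookkeeping of integrability. Because $\varphi$ may be an infinite weight, $D_\varphi$ is only affiliated with $\R$ and $D_\varphi^{\frac12}\notin L^2(A,\varphi)$, so one cannot cyclically permute a bare factor $D_\varphi^{\frac12}$ using the stated $L^p$--$L^{p'}$ trace property. The purpose of performing the $L^\infty\times L^1$ move first is precisely to avoid this: after it, each factor carries a full $D_\varphi^{\frac12}$ attached, on the appropriate side, to an element of $\N_\varphi$ or $\N_\varphi^*$, so that $D_\varphi^{\frac12}x,\,D_\varphi^{\frac12}y\in L^2(A,\varphi)$ and every permutation used is legitimate. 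I would therefore verify, via H\"older's inequality and the norm identities $\norm{D_\varphi^{\frac12}x}_2^{2}=\varphi(xx^*)$ and $\norm{D_\varphi^{\frac12}y}_2^{2}=\varphi(yy^*)$, that all intermediate products genuinely lie in the claimed spaces before applying any cyclic permutation.
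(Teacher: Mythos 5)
Your proof is correct and follows exactly the route the paper indicates: the paper does not prove Lemma \ref{2GL2} itself, but cites \cite[Proposition 2.13]{GL} and describes the statement as ``a variant of (\ref{2Tr-Varphi}), based on Lemma \ref{2GL1}'', which is precisely your argument --- iterate Lemma \ref{2GL1} to obtain $D_\varphi^{\frac12}y=\sigma_{-\frac{i}{2}}^{\varphi}(y)D_\varphi^{\frac12}$, cycle the trace through the $L^\infty\times L^1$ and then $L^2\times L^2$ pairings (using $x\in\N_\varphi^*$ and $y\in\N_\varphi\cap\N_\varphi^*$ to keep the factors $D_\varphi^{\frac12}x$ and $D_\varphi^{\frac12}y$ in $L^2(A,\varphi)$), and conclude with (\ref{2Tr-Varphi}). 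Your membership argument and the integrability bookkeeping are also handled correctly, so nothing further is needed.
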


\begin{remark}\label{2Xu}
Consider $V\colon (A,\varphi)\to (A',\varphi')$ as in Proposition \ref{2Haag}.
Using (\ref{2Duality}) both  for $A$ and for $A'$, we may consider 
$V_1^*\colon A'\to A$. Then $V_1^*$ is a weight-preserving positive map.
This is proved in \cite[Proposition 5.4]{HJX} in the case when 
the weights $\varphi$ and $\varphi'$ are finite. This actually extends
to the general case \cite{X}. The proof uses the space $\N_\varphi^\infty$.
\end{remark}

\begin{lemma}\label{EJ=J1*}
Let  $J\colon (A,\varphi)\to (M,\psi)$ be a $w^*$-continuous, unital and weight-preserving
$*$-homomorphism such that 
$\sigma^\psi\circ J=J\circ \sigma^\varphi.$ Let $\Edb_J$
be the conditional expectation associated with $J$, see (\ref{2EJ}). Then
$$
\Edb_J=J_1^*.
$$
\end{lemma}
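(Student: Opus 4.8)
The plan is to prove the identity by testing against a dense subset of $L^1(A,\varphi)$ and using the duality $L^1(A,\varphi)^*\simeq A$ of \eqref{2Duality}. Since $J$ is a weight-preserving positive map, Proposition \ref{2Haag} produces $J_1\colon L^1(A,\varphi)\to L^1(M,\psi)$, and $J_1^*\colon M\to A$ is then well defined through \eqref{2Duality} for $A$ and for $M$. By Lemma \ref{2GL0}, the space $D_\varphi^{\frac12}\M_\varphi^\infty D_\varphi^{\frac12}$ is dense in $L^1(A,\varphi)$, so it suffices to show that for every $x\in\M_\varphi^\infty$ and every $m\in M$,
\begin{equation}\label{Eq-goal}
{\rm Tr}_\varphi\bigl(\Edb_J(m)D_\varphi^{\frac12}xD_\varphi^{\frac12}\bigr)
={\rm Tr}_\psi\bigl(mD_\psi^{\frac12}J(x)D_\psi^{\frac12}\bigr),
\end{equation}
because the right-hand side is exactly $\langle m,J_1(D_\varphi^{\frac12}xD_\varphi^{\frac12})\rangle=\langle J_1^*(m),D_\varphi^{\frac12}xD_\varphi^{\frac12}\rangle$ while the left-hand side is $\langle\Edb_J(m),D_\varphi^{\frac12}xD_\varphi^{\frac12}\rangle$. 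For $x$ fixed, both sides of \eqref{Eq-goal} are $w^*$-continuous functions of $m$ (on the left because $\Edb_J$ is normal and $D_\varphi^{\frac12}xD_\varphi^{\frac12}$ induces a normal functional on $A$), so it is enough to establish \eqref{Eq-goal} for $m$ ranging over the $w^*$-dense $*$-subalgebra $\M_\psi^\infty$.

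So I would fix $x\in\M_\varphi^\infty$ and $m\in\M_\psi^\infty\subset\N_\psi^*$, and run the computation on both sides via Lemma \ref{2GL2}. The intertwining hypothesis $\sigma^\psi\circ J=J\circ\sigma^\varphi$ together with uniqueness of analytic continuation gives $\sigma^\psi_z(J(x))=J(\sigma^\varphi_z(x))$ for all $z\in\Cdb$; since $J$ is a weight-preserving $*$-homomorphism it carries $\N_\varphi^\infty$ into $\N_\psi^\infty$, hence $\M_\varphi^\infty$ into $\M_\psi^\infty$, so $J(x)\in\M_\psi^\infty$. Applying Lemma \ref{2GL2} in $(M,\psi)$ and then the previous identity at $z=-\frac{i}{2}$, the right-hand side of \eqref{Eq-goal} equals
$$
\psi\bigl(m\,\sigma^\psi_{-\frac{i}{2}}(J(x))\bigr)=\psi\bigl(m\,J(\sigma^\varphi_{-\frac{i}{2}}(x))\bigr).
$$
Using weight-preservation in the form $\psi=\varphi\circ\Edb_J$ and the $A$-bimodule property $\Edb_J(mJ(a))=\Edb_J(m)\,a$ of the conditional expectation, with $a=\sigma^\varphi_{-\frac{i}{2}}(x)\in\M_\varphi$, this becomes $\varphi\bigl(\Edb_J(m)\,\sigma^\varphi_{-\frac{i}{2}}(x)\bigr)$. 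Finally, applying Lemma \ref{2GL2} in $(A,\varphi)$ — legitimate once one knows $\Edb_J(m)\in\N_\varphi^*$, which follows from $m\in\N_\psi^*$ by the Kadison--Schwarz inequality $\Edb_J(m^*)^*\Edb_J(m^*)\leq\Edb_J(mm^*)$ and $\varphi\circ\Edb_J=\psi$ — the left-hand side of \eqref{Eq-goal} equals the same quantity $\varphi\bigl(\Edb_J(m)\,\sigma^\varphi_{-\frac{i}{2}}(x)\bigr)$. This proves \eqref{Eq-goal}, and hence $\Edb_J=J_1^*$.

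The main obstacle is not any single deep step but the domain bookkeeping that legitimizes the two applications of Lemma \ref{2GL2}: one must confirm the $w^*$-density of $\M_\psi^\infty$ and the inclusion $\M_\psi^\infty\subset\N_\psi^*$, that $J$ sends $\N_\varphi^\infty$ into $\N_\psi^\infty$ so that $J(x)$ lands in the domain $\M_\psi^\infty$, that $\Edb_J(m)\in\N_\varphi^*$, and that the product $m\,J(\sigma^\varphi_{-\frac{i}{2}}(x))$ lies in $\M_\psi$ so that $\psi$ may be evaluated on it (this uses $m\in\N_\psi^*$ and $J(\sigma^\varphi_{-\frac{i}{2}}(x))\in\N_\psi$). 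These are precisely the points where the possibly non-tracial nature of $\varphi$ and $\psi$ forces one to work through the analytic subalgebras $\M^\infty$ and the commutation relation of Lemmas \ref{2GL1} and \ref{2GL2}, rather than a naive cyclic-trace manipulation.
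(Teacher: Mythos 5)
Your proof is correct, and it is organized differently from the paper's. The paper does not prove the pairing identity $\langle \Edb_J(m),\,\cdotp\rangle=\langle J_1^*(m),\,\cdotp\rangle$ directly for all $m\in M$; instead it verifies that $J_1^*$ satisfies the three properties that characterize $\Edb_J$ uniquely: it is a contraction, it is weight-preserving and positive (this is Remark \ref{2Xu}, which for non-finite weights rests on the unpublished argument \cite{X}), and it satisfies $J_1^*\circ J=I_A$. Only this last identity is checked by a pairing computation, namely against $x\in\M_\varphi$ in the image of $J$ and $D_\varphi^{\frac12}yD_\varphi^{\frac12}$ with $y\in\M_\varphi^\infty$ — the same engine you use (Lemma \ref{2GL2} in $(M,\psi)$, the analytically continued intertwining at $z=-\frac{i}{2}$, $\psi\circ J=\varphi$, then Lemma \ref{2GL2} back in $(A,\varphi)$). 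Your version instead tests the equality on a $w^*$-dense set of $m\in\M_\psi^\infty$, at the price of invoking more structural properties of $\Edb_J$: its normality, the Tomiyama module property $\Edb_J(mJ(a))=\Edb_J(m)a$, the extension of $\varphi\circ\Edb_J=\psi$ from $M^+$ to $\M_\psi$ (which needs the easy observation $\Edb_J(\M_\psi)\subset\M_\varphi$), and Kadison--Schwarz to get $\Edb_J(m)\in\N_\varphi^*$ — though for $m\in\M_\psi^\infty\subset\M_\psi$ this last point follows more cheaply from weight-preservation, since $\Edb_J(m)\in\M_\varphi\subset\N_\varphi^*$. What your route buys is independence from Remark \ref{2Xu}: you never need to know that $J_1^*$ is positive or weight-preserving, so the appeal to \cite{X} disappears. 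What it costs is the extra bookkeeping on the $\Edb_J$ side; in particular, be careful to source the normality of $\Edb_J$ from the modular theory references (\cite[Corollary 10.5]{S} or \cite[Theorem IX.4.2]{Tak2}) rather than from the lemma itself, since the paper notes that $w^*$-continuity of $\Edb_J$ can alternatively be read off from the conclusion $\Edb_J=J_1^*$, and citing it there would make your argument circular.
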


\begin{proof}
We know that $J_1^*$ is a contraction. Furthermore, $J_1^*$ is a weight-preserving
positive map, by Remark \ref{2Xu}. Hence it remains to prove that $J_1^*\circ J=I_{A}$.

Let $x\in \M_\varphi$ and let $y\in\M_{\varphi}^\infty$. Then
$$
\bigl\langle J_1^*\circ J(x), D^\frac12_\varphi y D^\frac12_\varphi \bigr\rangle_{A,L^1(A)}
=\bigl\langle  J(x), J_1\bigl(D^\frac12_\varphi y D^\frac12_\varphi\bigr) \bigr\rangle_{M,L^1(M)}
=\bigl\langle   J(x), D^\frac12_\psi J(y) D^\frac12_\psi \bigr\rangle_{M,L^1(M)}.
$$
Since $J$ is a $*$-homomorphism, we have $J(\N_\varphi)\subset \N_\psi$. 
Using the commutation property $\sigma^\psi\circ J=J\circ \sigma^\varphi$, we deduce that
$J(\N_\varphi^\infty)\subset \N_\psi^\infty$. Using again
that  $J$ is a $*$-homomorphism, we deduce that 
$$
J\bigl(\M_\varphi^\infty\bigr)\subset \M_\psi^\infty.
$$
Likewise,
$$
J\bigl(\M_\varphi\bigr)\subset \M_\psi.
$$
Hence $J(y)\in\M_\psi^\infty$ and $J(x)\in \M_\psi\subset \N_\psi^*$.
By Lemma \ref{2GL2} (applied to $\psi$), we deduce that
$$
\bigl\langle J_1^*\circ J(x), D^\frac12_\varphi y D^\frac12_\varphi \bigr\rangle_{A,L^1(A)}
={\rm Tr}_\psi\bigl(J(x)D^\frac12_\psi J(y)D^\frac12_\psi\bigr)= 
\psi\bigl(J(x)\sigma^\psi_{-\frac{i}{2}}\bigl(J(y)\bigr)\bigr).
$$
Moreover, $J(x)\sigma^\psi_{-\frac{i}{2}}\bigl(J(y)\bigr) =
J\bigl(x\sigma^\varphi_{-\frac{i}{2}}(y)\bigr)$ and 
$\psi\circ J=\varphi$, hence
$$
\bigl\langle J_1^*\circ J(x), D^\frac12_\varphi y D^\frac12_\varphi \bigr\rangle_{A,L^1(A)}
=\varphi\bigl(x\sigma^\varphi_{-\frac{i}{2}}(y)\bigr).
$$
Thus, by Lemma \ref{2GL2} again, we obtain that
$$
\bigl\langle J_1^*\circ J(x), D^\frac12_\varphi y D^\frac12_\varphi \bigr\rangle_{A,L^1(A)}
=\langle x, D^\frac12_\varphi y D^\frac12_\varphi \rangle_{A,L^1(A)}.
$$
Since $J,J_1^*$ are $w^*$-continuous and $\M_\varphi$ is $w^*$-dense, the 
result follows from Lemma \ref{2GL0}.
\end{proof}

\bigskip
\section{Background on Fourier multipliers}\label{3ourier}   
Let $G$ be a locally compact group equipped with a fixed left 
Haar measure $dg$. We let $\epsilon$ denote the unit of $G$. 
Let $\lambda\colon G\to B(L^2(G))$ be the left regular 
representation of $G$, i.e.
$$
[\lambda(g)h](t)= h(g^{-1}t),\qquad 
h\in L^2(G),\ g,t\in G.
$$
Let $VN(G)\subset B(L^2(G))$ be the von Neumann algebra 
generated by $\{\lambda(g)\, :\, g\in G\}$. For any 
$f\in L^1(G)$, we set 
$$
\lambda(f)=\int_G f(g)\lambda(g)\, dg,
$$
where the integral is defined in the strong sense. The operators
$\lambda(f)$ belong to $VN(G)$ and
$\lambda(L^1(G))$ is actually a $w^*$-dense $*$-subalgebra of 
$VN(G)$. Further for all $f_1,f_2\in L^1(G)$,
we have $\lambda(f_1\ast f_2)=\lambda(f_1)\lambda(f_2)$.

Let $\Delta_G\colon G\to(0,\infty)$ be the
modular function of $G$ and recall that it is
characterized by the identity
$$
\int_G f(g^{-1})\, dg\,=
\int_G f(g)\Delta_G(g)^{-1}\,dg.
$$
For any $f\in L^1(G)$, let $f^*\in L^1(G)$ be defined by 
$$
f^*(g) = \Delta_G(g)^{-1}\overline{f(g^{-1})},
\qquad g\in G.
$$
Then it is easy to 
check that $\lambda(f)^*=\lambda(f^*)$.

Let $K(G)$ be the space of all continuous functions
$G\to\Cdb$ with compact support. It follows from above that
$\lambda(K(G))$ is a $*$-subalgebra of $VN(G)$.
Since $K(G)$ is dense in $L^1(G)$, the latter is $w^*$-dense.

We equip $VN(G)$ with the so-called Plancherel weight 
$$
\varphi_G\colon VN(G)^+\longrightarrow [0,\infty], 
$$
for which we refer to \cite[Section 2]{Haag} 
(see also \cite[Section VII.3]{Tak2}). 
For simplicity we let
$$
\sigma^G:=\sigma^{\varphi_G}\qquad
\hbox{and}\qquad D_G:=D_{\varphi_G}
$$
denote the modular automorphism group and the density of $\varphi_G$, respectively.

We record for later use  that 
for any $f\in L^1(G)\cap L^2(G)$,
$\lambda(f)\in\N_{\varphi_G}$ and
\begin{equation}\label{3BP}
\varphi_G\bigl(\lambda(f)^*\lambda(f)
\bigr) = \norm{f}_{L^2(G)}^2,
\qquad f\in L^1(G)\cap L^2(G).
\end{equation}
Moreover if $f\in K(G)$ is such that $\lambda(f)\geq 0$, 
then $\lambda(f)\in\M_{\varphi_G}$ and
\begin{equation}\label{3e}
\varphi_G\bigl(\lambda(f)\bigr)= f(\epsilon).
\end{equation}
We also note that by 
the last but one line of \cite[p. 125]{Haag}, we have
\begin{equation}\label{3Module0}
\sigma_r^{G}(\lambda(g))=
\Delta_G(g)^{ir}\lambda(g),
\qquad g\in G, \ r\in\Rdb.
\end{equation}
This readily implies
\begin{equation}\label{3Module}
\sigma_r^G(\lambda(f)) = \lambda\bigl(\Delta_G^{ir}f\bigr),
\qquad f\in L^1(G),\ r\in\Rdb.
\end{equation}
Consequently, $\lambda(K(G))$ is $\sigma^G$-invariant.

Let $C_b(G)$ be the space of all bounded 
continuous functions $u\colon G\to\Cdb$.
Following \cite{DCH}, we say that $u\in C_b(G)$ is 
a bounded Fourier multiplier on $VN(G)$ if
there exists a $w^*$-continuous operator 
$M_u\colon VN(G)\to VN(G)$ such that 
$$
M_u(\lambda(g))=u(g)\lambda(g),\qquad 
g\in G.
$$
In this case, the operator $M_u$ is necessarily unique.
Furthermore, we have
$$
M_u(\lambda(f))=\lambda(uf),\qquad f\in L^1(G),
$$
see \cite[Proposition 1.2]{DCH} and its proof.

\begin{lemma}\label{3Preserve}
Assume that $u$ is a bounded Fourier multiplier on $VN(G)$. 
\begin{itemize}
\item [(1)] For all $r\in\Rdb$, we have $\sigma_{r}^{G}
\circ M_u = M_u\circ \sigma_{r}^{G}$.
\item [(2)] The operator $M_u$ is unital if and only if 
$u(\epsilon)=1$. 
\item [(3)] If $M_u$ is both unital and positive, then
$\varphi_G\circ M_u = \varphi_G$.
\end{itemize}
\end{lemma}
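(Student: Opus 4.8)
The plan is to treat the three assertions separately: (1) and (2) reduce to direct computations on the $w^*$-dense $*$-algebra $\lambda(L^1(G))$, while (3) is the substantial point and will go through the Pedersen-Takesaki theorem (Lemma \ref{1PT}).

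For (1), I would test the two composites on the generators $\lambda(f)$, $f\in L^1(G)$. Using $M_u(\lambda(f))=\lambda(uf)$ together with formula (\ref{3Module}) for $\sigma^G_r$, one finds
$$
\sigma^G_r\bigl(M_u(\lambda(f))\bigr)=\sigma^G_r(\lambda(uf))=\lambda\bigl(\Delta_G^{ir}uf\bigr)=M_u\bigl(\lambda(\Delta_G^{ir}f)\bigr)=M_u\bigl(\sigma^G_r(\lambda(f))\bigr).
$$
Hence $\sigma^G_r\circ M_u$ and $M_u\circ\sigma^G_r$ agree on $\lambda(L^1(G))$; since both maps are $w^*$-continuous and $\lambda(L^1(G))$ is $w^*$-dense in $VN(G)$, they coincide. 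For (2), $\lambda(\epsilon)=1$ is the unit of $VN(G)$, so evaluating the defining relation $M_u(\lambda(g))=u(g)\lambda(g)$ at $g=\epsilon$ gives $M_u(1)=u(\epsilon)1$, whence $M_u$ is unital if and only if $u(\epsilon)=1$.

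For (3), I would set $\gamma:=\varphi_G\circ M_u$ on $VN(G)^+$. Because $M_u$ is positive and $w^*$-continuous and $\varphi_G$ is a normal weight, $\gamma$ is a normal weight. Using (1) and the invariance $\varphi_G\circ\sigma^G_r=\varphi_G$ of the Plancherel weight under its own modular group, I get $\gamma\circ\sigma^G_r=\varphi_G\circ\sigma^G_r\circ M_u=\varphi_G\circ M_u=\gamma$. I would then take $B=\lambda(K(G))$, which is a $w^*$-dense $*$-subalgebra of $VN(G)$, is $\sigma^G$-invariant by (\ref{3Module}), and satisfies $B\subset\N_{\varphi_G}$ since $K(G)\subset L^1(G)\cap L^2(G)$ and (\ref{3BP}) holds. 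The key computation is that for $f\in K(G)$, setting $h=f^*\ast f\in K(G)$ so that $\lambda(h)=\lambda(f)^*\lambda(f)$, positivity of $M_u$ forces $\lambda(uh)=M_u(\lambda(f)^*\lambda(f))\geq 0$ with $uh\in K(G)$; therefore, by (\ref{3e}) and (2),
$$
\gamma\bigl(\lambda(f)^*\lambda(f)\bigr)=\varphi_G(\lambda(uh))=(uh)(\epsilon)=u(\epsilon)h(\epsilon)=h(\epsilon)=\varphi_G(\lambda(h))=\varphi_G\bigl(\lambda(f)^*\lambda(f)\bigr),
$$
the fifth equality again by (\ref{3e}). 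In particular $B\subset\N_\gamma$, so $\gamma$ is semi-finite, and $\gamma$ agrees with $\varphi_G$ on $\{x^*x:x\in B\}$. Lemma \ref{1PT} then gives $\gamma=\varphi_G$, that is $\varphi_G\circ M_u=\varphi_G$.

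The main obstacle is that, in the non-tracial setting, one cannot simply argue with a state but must verify that $\gamma=\varphi_G\circ M_u$ is an admissible input for Pedersen-Takesaki, namely a \emph{normal, semi-finite, $\sigma^G$-invariant} weight, and then check equality with $\varphi_G$ on a suitable core. Once $B=\lambda(K(G))$ is identified as a $w^*$-dense $\sigma^G$-invariant $*$-subalgebra of $\N_{\varphi_G}$ on which $\gamma$ and $\varphi_G$ coincide — which rests entirely on the explicit evaluation formula (\ref{3e}) and on unitality forcing $u(\epsilon)=1$ — the conclusion is immediate.
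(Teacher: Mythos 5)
Your proof is correct and follows essentially the same route as the paper: direct computation on the $w^*$-dense algebra $\lambda(L^1(G))$ (the paper works with the generators $\lambda(g)$ via (\ref{3Module0}), a trivial variant) for (1) and (2), and the Pedersen--Takesaki theorem (Lemma \ref{1PT}) applied to $B=\lambda(K(G))$ via the evaluation formula (\ref{3e}) for (3). Your version merely spells out the hypotheses of Lemma \ref{1PT} (normality, semi-finiteness, $\sigma^G$-invariance of $\gamma=\varphi_G\circ M_u$) that the paper leaves implicit, which is a welcome but inessential elaboration.
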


\begin{proof}
According to (\ref{3Module0}), we have
$$
\bigl(\sigma_{r}^{G}
\circ M_u\bigr)(\lambda(g))= \Delta_G(g)^{ir}u(g)
\lambda(g)=\bigl(M_u\circ \sigma_{r}^{G}
\bigr)(\lambda(g)),
$$
for all $g\in G$ and all $r\in\Rdb$. 
Since $M_u$ is $w^*$-continuous, (1) follows at once.

Part (2) is clear, since $\lambda(\epsilon)$ is the unit of $VN(G)$.

To prove (3), assume that $M_u$ is 
both unital and positive. 
Let $f\in K(G)$ such that $\lambda(f)\geq 0$. Then
$uf\in K(G)$ and $\lambda(uf)=M_u(\lambda(f))\geq 0$.
Hence by (\ref{3e}), 
$$
\varphi_G\bigl(M_u(\lambda(f)))= (uf)(\epsilon) = f(\epsilon)
=\varphi_G\bigl(\lambda(f)).
$$
This implies that for all $f_1\in K(G)$, we have 
$(\varphi_G \circ M_u)\bigl(\lambda(f_1)^*\lambda(f_1)\bigr)= 
\varphi_G\bigl(\lambda(f_1)^*\lambda(f_1)\bigr)$. 
Using part (1), the $\sigma^G$-invariance of $\lambda(K(G))$, the $w^*$-density of $\lambda(K(G))$
 and Lemma \ref{1PT}, we deduce that
$\varphi_G\circ M_u = \varphi_G$.
\end{proof}

It follows from (\ref{3Module}) that 
$$
\lambda(K(G))\subset VN(G)_a,
\quad \hbox{with}\ \sigma_z^G(\lambda(f)) = \lambda\bigl(\Delta_G^{iz}f\bigr)
$$
for all $f\in K(G)$ and all $z\in\Cdb$. This identity further
implies that 
$\lambda(f)\in \N_{\varphi_G}^\infty$. Let $K(G)\ast K(G)$ denote the linear span
of all functions $f_1\ast f_2$, with $f_1,f_2\in K(G)$. 
We deduce from above that
\begin{equation}\label{3Inc}
\lambda\bigl(K(G)\ast K(G)\bigr)\subset \M_{\varphi_G}^\infty.
\end{equation}

We will use the Haagerup non-commutative 
$L^p$-spaces $L^p(VN(G),\varphi_G)$ 
associated with the Plancherel weight of $G$. 
If $u$ is a bounded Fourier multiplier on $VN(G)$
and $M_u$ is both unital and positive, then
$\norm{M_u}\leq 1$, and hence 
\begin{equation}\label{31}
\vert u(g)\vert\leq 1,\qquad g\in G.
\end{equation}
It further follows
from Lemma \ref{3Preserve}, (3) and Proposition
\ref{2Haag} that for any $1\leq p<\infty$,
$M_u$ gives rise to a contraction
$M_{u,p}\colon L^p(VN(G),\varphi_G) \to
L^p(VN(G),\varphi_G)$
taking $D_G^{\frac{1}{2p}}xD_G^{\frac{1}{2p}}$ to
$D_G^{\frac{1}{2p}} M_u(x)
D_G^{\frac{1}{2p}}$ for all $x\in \M_{\varphi_G}$.

\begin{lemma}\label{3SA}
Assume that $u$ is a bounded Fourier multiplier 
on $VN(G)$ such that $M_u$ is both unital and positive.
The following assertions are equivalent.
\begin{itemize}
\item [(i)] The function $u$ is real-valued.
\item [(ii)] The 
operator $M_{u,2}\colon L^2(VN(G),\varphi_G)\to
L^2(VN(G),\varphi_G)$ is self-adjoint.
\end{itemize}
In this case, $M_{u,2}$ is positive in the Hilbertian sense, that is,
$(M_{u,2}(h)\vert h)_{L^2(VN(G))}\geq 0$ 
for all $h\in L^2(VN(G),\varphi_G)$, if and only if $u$ is valued in $[0,1]$.
\end{lemma}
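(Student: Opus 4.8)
The plan is to reduce the whole statement to a single explicit computation of the sesquilinear form of $M_{u,2}$ on a convenient dense family of vectors, and then to read off both the self-adjointness criterion (i)$\Leftrightarrow$(ii) and the positivity criterion from the resulting formula.

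First I would introduce the test vectors $h_f:=D_G^{\frac14}\lambda(f)D_G^{\frac14}$, with $f$ ranging over $K(G)\ast K(G)$. By (\ref{3Inc}) we have $\lambda(f)\in\M_{\varphi_G}^\infty\subset\M_{\varphi_G}$, so $h_f$ lies in the domain $D_G^{\frac14}\M_{\varphi_G}D_G^{\frac14}$ on which $M_{u,2}$ is defined and, since $M_u(\lambda(f))=\lambda(uf)$, we get $M_{u,2}(h_f)=D_G^{\frac14}\lambda(uf)D_G^{\frac14}$. I would check that these vectors are dense in $L^2(VN(G),\varphi_G)$: using Lemma \ref{2GL1} and the analytic form of (\ref{3Module}) one rewrites $h_f=\lambda(\Delta_G^{\frac14}f)D_G^{\frac12}$, and density then follows from the Plancherel isometry $f\mapsto\lambda(f)D_G^{\frac12}$ of (\ref{3BP}) together with a routine approximate-identity argument in $L^2(G)$.

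Next I would compute, for $f_1,f_2\in K(G)\ast K(G)$, the inner product (\ref{2Inner})
\[
(M_{u,2}h_{f_1}\,\vert\,h_{f_2})_{L^2(VN(G))}={\rm Tr}_{\varphi_G}\bigl(D_G^{\frac14}\lambda(f_2^*)D_G^{\frac12}\lambda(uf_1)D_G^{\frac14}\bigr).
\]
Moving a factor $D_G^{\frac14}$ by the trace property of ${\rm Tr}_{\varphi_G}$ and then applying Lemma \ref{2GL2} with the modular identity $\sigma^G_{-\frac{i}{2}}(\lambda(f_2^*))=\lambda(\Delta_G^{\frac12}f_2^*)$ (the $z=-\frac{i}{2}$ case of (\ref{3Module})), this reduces to $\varphi_G\bigl(\lambda\bigl((uf_1)\ast(\Delta_G^{\frac12}f_2^*)\bigr)\bigr)$; here the hypotheses of Lemma \ref{2GL2} hold because $\lambda(uf_1)\in\N_{\varphi_G}^*$ and $\lambda(f_2^*)\in\M_{\varphi_G}^\infty$. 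Evaluating by (\ref{3e}), extended from positive elements to $\M_{\varphi_G}$ by linearity, and using $f_2^*(g)=\Delta_G(g)^{-1}\overline{f_2(g^{-1})}$ to unwind the convolution at $\epsilon$, I expect to arrive at the clean formula
\[
(M_{u,2}h_{f_1}\,\vert\,h_{f_2})_{L^2(VN(G))}=\int_G u(g)\,f_1(g)\,\overline{f_2(g)}\,\Delta_G(g)^{\frac12}\,dg.
\]

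From this formula the conclusions are immediate. Self-adjointness of $M_{u,2}$ means this quantity equals $\overline{(M_{u,2}h_{f_2}\vert h_{f_1})}=\int_G\overline{u}\,f_1\overline{f_2}\,\Delta_G^{\frac12}\,dg$ for all $f_1,f_2$; since the products $f_1\overline{f_2}$ span a dense subspace and $u$ is continuous, this is equivalent to $(u-\overline{u})\Delta_G^{\frac12}\equiv 0$, i.e. to $u$ being real-valued, which gives (i)$\Leftrightarrow$(ii) (the reverse implication being clear, as $u=\overline{u}$ makes the form Hermitian). In the self-adjoint case, setting $f_1=f_2=f$ gives $(M_{u,2}h_f\vert h_f)=\int_G u\,\vert f\vert^2\,\Delta_G^{\frac12}\,dg$, which is nonnegative for every $f$ if and only if $u\geq 0$ everywhere (again by continuity); combined with $u\leq 1$ from (\ref{31}), this is exactly $u$ valued in $[0,1]$, and the converse is obvious. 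The main obstacle I anticipate is the modular bookkeeping — correctly carrying out the analytic continuation $\sigma^G_{-\frac{i}{2}}(\lambda(f_2^*))=\lambda(\Delta_G^{\frac12}f_2^*)$, the conversion $f_2^*(g^{-1})=\Delta_G(g)\overline{f_2(g)}$ inside the convolution, and the accompanying substitutions involving $\Delta_G$ — together with the two density/extension points flagged above (density of the $h_f$ and validity of (\ref{3e}) beyond positive elements).
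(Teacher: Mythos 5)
Your proof is correct and takes essentially the same route as the paper: you compute the sesquilinear form of $M_{u,2}$ on the dense family $\lambda\bigl(K(G)\ast K(G)\bigr)D_G^{\frac12}$ (your symmetric vectors $h_f=D_G^{\frac14}\lambda(f)D_G^{\frac14}=\lambda\bigl(\Delta_G^{\frac14}f\bigr)D_G^{\frac12}$ are the same family reparametrized by $f\mapsto\Delta_G^{\frac14}f$, which is why the harmless weight $\Delta_G^{\frac12}$ appears in your integral formula where the paper gets $\int_G \overline{f'}\,uf$), and you conclude by density and by taking $f_1=f_2$ exactly as the paper does. The only cosmetic differences are that you evaluate the pairing via Lemma~\ref{2GL2} and a linear extension of (\ref{3e}) where the paper uses (\ref{2Tr-Varphi}) together with the polarized form of (\ref{3BP}), and that the paper justifies the density of this family by the proof of \cite[Theorem 2.4,(b)]{GL} rather than your Plancherel-isometry sketch.
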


\begin{proof}
We first note that $\varphi_G(\lambda(f')^*\lambda(f))=
(f\vert f')_{L^2(VN(G))}$
for all $f,f'\in L^1(G)\cap L^2(G)$, 
by polarisation of the identity (\ref{3BP}).
We use this identity twice below.

Let $f,f'\in K(G)\ast K(G)$ and recall  (\ref{3Inc}). By Lemma \ref{2GL1} and the fact that $M_u$ commutes with
$\sigma^G$, see Lemma \ref{3Preserve}, (1), we have
\begin{align*}
M_{u,2}\bigl(\lambda(f)D_G^\frac12\bigr) 
& = M_{u,2}\Bigl(D_G^\frac14\,\sigma^G_{\frac{i}{4}}\bigl(\lambda(f)\bigr)
D_G^\frac14\Bigr)\\
& = D_G^\frac14 M_u\Bigl(\sigma^G_{\frac{i}{4}}\bigl(\lambda(f)\bigr)\Bigr)D_G^\frac14\\
& = D_G^\frac14 \,\sigma^G_{\frac{i}{4}}\bigl(\lambda(uf)\bigr)D_G^\frac14\\
& = \lambda(uf)\,D_G^\frac12.
\end{align*}
Hence by (\ref{2Tr-Varphi}), we have 
$$
\Bigl(M_{u,2}\bigl(\lambda(f)D_G^\frac12\bigr) 
\big\vert \lambda(f')D_G^\frac12\Bigr)_{L^2(VN(G))}
 = {\rm Tr}_{\varphi_G}\Bigl(D_G^\frac12 \lambda(f')^*\lambda(uf) D_G^\frac12\Bigr)
= \varphi_G\bigl(\lambda(f')^*\lambda(uf)\bigr).
$$
Therefore,
\begin{equation}\label{2Scalar}
\Bigl(M_{u,2}\bigl(\lambda(f)D_G^\frac12\bigr) 
\big\vert \lambda(f')D_G^\frac12\Bigr)_{L^2(VN(G))}
=\int_G \overline{f'}\, uf.
\end{equation}
Likewise,
$$
\Bigl(\lambda(f)D_G^\frac12 \big\vert M_{u,2}
\bigl(
\lambda(f')D_G^\frac12\bigr)\Bigr)_{L^2(VN(G))}
=\int_G \overline{uf'}\, f.
$$
We deduce that $u$ is real-valued if and only if
$$
\Bigl(M_{u,2}\bigl(
\lambda(f)D_G^\frac12\bigr)\big\vert
\lambda(f')D_G^\frac12\Bigr)_{L^2(VN(G))}
= \Bigl(\lambda(f)D_G^\frac12 \big\vert M_{u,2}
\bigl(
\lambda(f')D_G^\frac12\bigr)\Bigr)_{L^2(VN(G))}
$$
for all $f,f'\in K(G)\ast K(G)$.

It follows from \cite[Theorem 2.4,(b)]{GL} that $\N_{\varphi_G}^\infty D_G^\frac12$ is dense
in $L^2(VN(G),\varphi_G)$. The proof of this theorem shows as well that 
$\lambda(K(G)\ast K(G)) D_G^\frac12$ is dense in  $L^2(VN(G),\varphi_G)$. 
We therefore deduce the equivalence ``$(i)\Leftrightarrow(ii)$'' from above.

By (\ref{31}), the function $u$ is valued in $[-1,1]$. Hence
the last assertion is obtained by applying (\ref{2Scalar}) with $f'=f$.
\end{proof}

\bigskip
\section{Dilation result}\label{4Main}

The main result of this paper is the following theorem. In the case
when $G$ is discrete, this result was established in 
\cite[Theorem 4.6]{Ar1}, see also \cite{Ric}. Next, it was proved in 
\cite[Theorem (B)]{D} for any unimodular group $G$.
The approach for the non-unimodular case is similar, however 
significant difficulties arise from the fact that the Plancherel weight may 
be non tracial.

\begin{theorem}\label{4AD}
Let $G$ be a locally compact  group and let 
$u\in C_b(G)$ such that 
$M_u\colon VN(G)\to VN(G)$ is 
unital and completely positive. If
$u$ is real valued, then $M_u$ admits an absolute dilation.
\end{theorem}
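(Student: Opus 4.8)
The plan is to realise $M_u$ as the compression of a stationary Markov shift manufactured from a unitary representation attached to the symbol, and then to push that construction through the Tomita--Takesaki/Haagerup apparatus rather than through a trace. First I would translate the hypotheses into representation theory. Since $M_u$ is unital and completely positive, its symbol $u$ is a continuous positive-definite function on $G$ with $u(\epsilon)=1$ (Lemma \ref{3Preserve}, (2)) and $\vert u\vert\le 1$ (by \eqref{31}); being moreover real valued, it is symmetric, $u(g^{-1})=u(g)$. Applying the GNS/Naimark construction to $u$ produces a real Hilbert space $\mathcal H$, an orthogonal representation $\pi\colon G\to O(\mathcal H)$ and a cyclic unit vector $\xi\in\mathcal H$ with
\[
u(g)=\langle \pi(g)\xi,\xi\rangle,\qquad g\in G.
\]
This is the datum from which the dilation is built, exactly as in the unimodular case \cite{D}; the point of departure is the weight carried by the ambient algebra.

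Next I would assemble the candidate $(M,\psi,J,U)$. Take an auxiliary \emph{finite} von Neumann algebra $(C,\tau)$ carrying the representation, for instance a Clifford/CAR algebra over $\mathcal H$ whose self-adjoint symmetries $s(h)$ satisfy $\tau(s(h)s(k))=\langle h,k\rangle$, so that $\tau(s(\xi)s(\pi(g)\xi))=u(g)$ realises $u$ \emph{exactly} as a two-point function. Form the two-sided tensor product $\mathcal C=\overline{\bigotimes}_{n\in\Zdb}(C_n,\tau_n)$, which is again finite with trace $\tau_{\mathcal C}=\bigotimes_n\tau_n$, and put
\[
M=VN(G)\,\overline{\otimes}\,\mathcal C,\qquad \psi=\varphi_G\otimes\tau_{\mathcal C}.
\]
The crucial structural point is that the only non-tracial ingredient of $\psi$ is the single copy of $\varphi_G$; this is the device that keeps the infinite weight under control and explains the central role of $L^1(VN(G),\varphi_G)$. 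The embedding is the plain $J(\lambda(g))=\lambda(g)\otimes 1$, so that $\Edb_J=\mathrm{id}\otimes\tau_{\mathcal C}$, and the $*$-automorphism is $U=R\circ S$, where $S$ is the two-sided shift of the index $n$ and $R=\mathrm{Ad}(W)$ is a one-step coupling rotation built on $VN(G)\,\overline{\otimes}\,C_0$ from the symmetry $s(\xi)$. Writing $R_j=S^jRS^{-j}$, one has $U^kJ(\lambda(g))=R_0\cdots R_{k-1}(\lambda(g)\otimes 1)$, and because each $R_j$ couples the single copy $VN(G)$ to a fresh, independent factor $C_j$, the tower property gives $\Edb_JU^kJ=(\Edb_JUJ)^k$, with $\Edb_JUJ=M_u$ by the choice of $W$.

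The structural verification then proceeds through the lemmas of Section \ref{2AD}. Weight-preservation of $J$ and the intertwining $\sigma^\psi\circ J=J\circ\sigma^G$ are immediate, since $\sigma^\psi_r=\sigma^G_r\otimes\mathrm{id}$ (the trace $\tau_{\mathcal C}$ has trivial modular group); existence and uniqueness of the weight-preserving conditional expectation $\Edb_J$ are guaranteed by the theorem quoted after \eqref{2EJ}, and Lemma \ref{EJ=J1*} identifies $\Edb_J$ with $J_1^*$. The dilation identity \eqref{1Formula} is checked on the $w^*$-dense, $\sigma^G$-invariant $*$-algebra $\lambda(K(G)\ast K(G))\subset\M_{\varphi_G}^\infty$ from \eqref{3Inc}: using $\Edb_J=J_1^*$, the trace functional ${\rm Tr}_{\varphi_G}$, the formula \eqref{2Tr-Varphi}, and the analytic-element commutation of Lemmas \ref{2GL1} and \ref{2GL2}, the computation of $\Edb_JU^kJ(\lambda(f))$ reduces to the lag-$k$ correlation $\tau_{\mathcal C}$ of the symmetries across coordinates $0$ and $k$; independence of distinct factors $C_j$ collapses this to $u(g)^k\lambda(g)=M_u^k(\lambda(g))$. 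Density (Lemma \ref{2GL0}) and $w^*$-continuity then yield \eqref{1Formula} in full, and \ref{1PT} can be invoked where a weight must be pinned down on such a dense subalgebra.

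I expect the genuine difficulty to lie not in the combinatorics above but in the modular bookkeeping forced by the non-traciality of $\varphi_G$. In \cite{D} the weight is a trace, so $\sigma^G$ is trivial, every coupling unitary sits in the centralizer, and $\mathrm{Ad}(W)$ preserves the trace automatically; here one must construct $W$ (equivalently $U$) so that it is \emph{genuinely} $\psi$-preserving and compatible with the modular flow while still producing the factor $u(g)$, even though the $VN(G)$-part of $W$ is not $\sigma^G$-fixed (by \eqref{3Module0} it scales by $\Delta_G(g)^{ir}$). Thus the crux is twofold: (i) proving that $\psi$ is normal, semi-finite and faithful and that $U$ preserves it, which requires handling $W$ through analytic elements and the density $D_G$; and (ii) transporting every conditional-expectation and adjoint computation to the Haagerup space $L^1(VN(G),\varphi_G)$ by means of Lemma \ref{EJ=J1*} and Lemmas \ref{2GL1} and \ref{2GL2}. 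Once these modular compatibilities are secured, the correlation computation delivering $u(g)^k$ is essentially the one from the unimodular setting.
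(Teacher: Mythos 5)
Your preparatory steps (positive definiteness of $u$, the real Hilbert space with $u(g)=\langle\pi(g)\xi,\xi\rangle$, the Clifford algebra realization $\tau(s(\xi)s(\pi(g)\xi))=u(g)$, the two-sided infinite tensor product) all match the paper's construction. But there is a genuine gap at the heart of your proposal: you place everything in the \emph{plain} tensor product $M=VN(G)\,\overline{\otimes}\,\mathcal C$ with $\psi=\varphi_G\otimes\tau_{\mathcal C}$ and take $U=R\circ S$ with $R=\mathrm{Ad}(W)$ a ``coupling rotation,'' and no such $W$ exists. For your correlation computation to produce the factor $u(g)$, you need $R(\lambda(g)\otimes 1)=\lambda(g)\otimes s(\xi)s(\pi(g)\xi)$ (symmetries in position $0$); since $R$ is an automorphism, this forces $g\mapsto s(\xi)s(\pi(g)\xi)$ to be a group homomorphism, which fails in general (a product of three distinct Clifford generators is not a generator). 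Worse, if $W\in M$ then $R$ is inner, and for abelian $G$ the element $\lambda(g)\otimes 1$ is \emph{central} in $VN(G)\,\overline{\otimes}\,\mathcal C$, so $\mathrm{Ad}(W)$ fixes it; since $S$ also fixes $VN(G)\otimes 1$, you get $U^kJ(\lambda(g))=\lambda(g)\otimes 1$ and hence $\Edb_JU^kJ=I_{VN(G)}$ for all $k$. Already for $G=\Zdb$ and any real positive-definite normalized $u\not\equiv 1$, your dilation collapses to the identity instead of $M_u$.

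This obstruction is precisely why the paper works in the crossed product $M=N^\otimes\rtimes_\rho G$, where $\rho$ is the translation action induced by $\theta_g(h_t)=h_{gt}$, rather than in a tensor product. There the covariance relation (\ref{4NewAction}) yields (\ref{4Permute}), namely $(\lambda(g)\otimes 1_\otimes)\delta_\epsilon=\delta_g(\lambda(g)\otimes 1_\otimes)$, and one takes $U=\mathrm{Ad}(\delta_\epsilon)\circ\widetilde{\mathfrak s}$ with $\delta_\epsilon\in N^\otimes$: covariance supplies the multiplicativity your $W$ cannot, since $UJ(\lambda(g))=\delta_\epsilon\delta_g(\lambda(g)\otimes 1_\otimes)$ is multiplicative in $g$ because $\rho(g)(\delta_\epsilon\delta_h)=\delta_g\delta_{gh}$ and $\delta_g^2=1$. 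Two further points you underestimate follow from this change of ambient algebra: $\psi$ is the \emph{dual weight} of $\tau_\otimes$, not a product weight (though its modular group does satisfy the formulas (\ref{4Sigma-Dual}) you anticipated), and your formula $\Edb_J=\mathrm{id}\otimes\tau_{\mathcal C}$ must be replaced by the identity (\ref{4Magic}), $\Edb_J(xJ(y))=\tau_\otimes(x)y$, whose proof is genuinely nontrivial: it passes through (\ref{4Disjoint}) (proved via centralizer arguments and Lemma \ref{1PT}) and the $L^1(VN(G),\varphi_G)$-duality machinery of Lemmas \ref{EJ=J1*} and \ref{2GL2}, replacing your tower-property heuristic by a direct computation of $U^kJ(\lambda(g))$. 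Your modular-theoretic remarks in the last paragraph correctly identify where the non-tracial difficulties live, but the construction they are meant to service must start from the crossed product, not from $VN(G)\,\overline{\otimes}\,\mathcal C$.
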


\begin{proof}
We let $\ell^{1,{\mathbb R}}_G$ be the real $\ell^1$-space over $G$, with standard
basis $(e_t)_{t\in G}$. Define a bilinear form
$[\,\cdotp,\,\cdotp]_u\colon \ell^{1,{\mathbb R}}_G\times \ell^{1,{\mathbb R}}_G\to\Rdb$ by
$$
[\alpha,\beta]_u=\sum_{s,t\in G} u(t^{-1}s)\alpha_t\beta_s,
\qquad \alpha=(\alpha_t)_{t\in G},\,
\beta=(\beta_s)_{s\in G}\in \ell^{1,{\mathbb R}}_G.
$$
According to \cite[Proposition 4.2]{DCH}, 
the complete positivity of
$M_u\colon VN(G)\to VN(G)$ implies that the function $u$ is positive definite. 
Hence $[\,\cdotp,\,\cdotp]_u$ is positive semi-definite. 
Let $E_u=\{\alpha\in \ell^{1,{\mathbb R}}_G\, :\, [\alpha,\alpha]_u=0\}$ 
and let $H_u$ be the real Hilbert space
obtained by completing the space $\ell^{1,{\mathbb R}}_G/E_u$ 
equipped with the norm induced by
$[\alpha,\alpha]_u(\alpha,\alpha)^\frac12$.  In the sequel,
we let $h_t\in H_u$ denote the class of 
$e_t\in\ell^1_G(\Rdb)$, for all $t\in G$. We let $(\,\cdotp\vert\,\cdotp)_{H_u}$ denote the inner
product on $H_u$.

Let $N$ be the Clifford von Neumann algebra over $H_u$
and let $\tau$ denote its standard trace. Let $N_{sa}$ be the self-adjoint part of
$N$. We recall that there exists an $\Rdb$-linear map
$$
w\colon H_u \longrightarrow N_{sa} 
$$
such that $w(\xi)$ is a unitary for all $\xi\in H_u$ with $\norm{\xi}_{H_u}=1$,
\begin{equation}\label{4Inner}
\tau\bigl(w(h)w(h')\bigr) = (h\vert h')_{H_u},\qquad h,h'\in H_u,
\end{equation}
and $N$ is generated (as a von Neumann algebra) by the range of $w$. We refer to
\cite{BKS, BR, Ric} for the construction of the Clifford von Neumann algebra and more information.

Fix some $g\in G$. For any $(\alpha_t)_{t\in G}\in \ell^{1,{\mathbb R}}_G$,
we have
\begin{align*}
\Bignorm{\sum_t \alpha_{g^{-1}t}h_t}_{H_u}^2 & = 
\sum_{s,t\in G} u(t^{-1}s) \alpha_{g^{-1}t}\alpha_{g^{-1}s} = 
\sum_{s,t\in G} u\bigl((gt)^{-1}(gs)\bigr) 
\alpha_{t}\alpha_{s} \\ & =\sum_{s,t\in G} u(t^{-1}s)\alpha_t\alpha_{s}
=\Bignorm{\sum_t \alpha_{t}h_t}_{H_u}^2.
\end{align*}
Hence there exists a (necessarily unique) orthogonal map $\theta_g\in B(H_u)$ such that 
$$
\theta_g(h_t)=h_{gt},\qquad t\in G.
$$
Let $\widetilde{\theta_g}\colon N\to N$ be the second quantization of 
$\theta_g$. By \cite[Theorem 2.11]{BKS}, $\widetilde{\theta_g}$
is a trace-preserving $*$-automorphism and we have
\begin{equation}\label{4Action}
\widetilde{\theta_g}\bigl(w(h)\bigr)= w\bigl(\theta_g(h)\bigr),\qquad
h\in H_u.
\end{equation}

Recall that $\tau$ is normalized. Let
$$
N^{\otimes} = \mathop{\overline{\otimes}}_{\mathbb Z} N,
$$
be the infinite tensor product of $(N,\tau)$ over the index set $\Zdb$, as
considered in \cite[Definition XIV.1.6]{Tak3}. Note that the associated 
state $\tau_\otimes := \mathop{\overline{\otimes}}_{\mathbb Z}\tau_N$ on $N^\otimes$
is a normal faithful tracial state. 
In the sequel, we let $1$ denote the unit of $N$ and we let $1_\otimes$ 
denote the unit of $N^\otimes$.

For any integer $m\geq 1$  and for any finite family $(x_k)_{k=-m}^{m}$ in
$N$, set
$$
\mathop{\otimes}_{k} x_k = \cdots 1\otimes 1\otimes x_{-m}\otimes\cdots\otimes
x_0\otimes x_1\otimes\ldots \otimes x_m\otimes 1\otimes 1 \cdots,
$$
where each $x_k$ is in position $k$. For any $g\in G$, we let 
$\rho(g)\colon N^\otimes\to N^\otimes$ be the 
$*$-automorphism such that 
$$
[\rho(g)]\bigl(\mathop{\otimes}_{k} x_k \bigr)=\mathop{\otimes}_{k} 
\widetilde{\theta_g}(x_{k})
$$
for all finite families $(x_k)_{k}$ in $N$. Since $\widetilde{\theta_g}$ is 
trace-preserving, $\rho(g)$ is trace-preserving.
The mapping $G\to {\rm Aut}(N)$ taking
any $g\in G$ to $\widetilde{\theta_g}$ is a group homomorphism and 
it follows from \cite[Lemma 5.4]{D}
that it is continuous in the sense of 
\cite[Proposition X.1.2]{Tak2}. We easily deduce that 
$\rho\colon G\to {\rm Aut}(N^\otimes)$ is 
a continuous group homomorphism as well.

We are going to use crossed products, for which we refer again
to either \cite[Chapter IV]{S} or \cite[Chapter X]{Tak2}. We let 
$$
M = N^\otimes\rtimes_{\rho} G\,\subset B(L^2(G))\overline{\otimes}N^\otimes
$$
be the crossed product associated with $\rho$.
We regard $N^\otimes\subset M$ in the usual way. We recall
that $VN(G)\otimes 1_\otimes\subset M$ and that $M$ is
generated (as a von Neumann algebra)
by $N^\otimes$ and $VN(G)\otimes 1_\otimes$. We also recall (see e.g. \cite[19.1.(2)]{S}) that 
\begin{equation}\label{4NewAction}
\rho(g)x=\bigl(\lambda(g)\otimes 1_\otimes\bigr)x\bigl(\lambda(g^{-1})\otimes 1_\otimes\bigr),
\qquad x\in N^\otimes,\, g\in G.
\end{equation}
This implies that the linear span
of $\{x (y\otimes 1_\otimes)\, :\, x\in N^\otimes,\, y\in VN(G)\}$ 
is $w^*$-dense  in $M$.
We will use it in the sequel without any further reference.

Let $K(G, N^\otimes)$ be the space of all
$\sigma$-strong$^*$-continuous functions $G\to N^\otimes$ with compact support. 
For any
$F\in K(G, N^\otimes)$, we define $T_F\in M$ by
$$
T_F= \int_G F(g) (\lambda(g)\otimes 1_\otimes)\, dg,
$$
where the integral is defined in the $w^*$-sense. Following
\cite[pp. 242-243]{Tak2}, given any   
$F_1,F_2\in K(G, N^\otimes)$, we may define 
$F_1\ast F_2\in K(G, N^\otimes)$  by
$$
(F_1\ast F_2)(g) = \int_{G} F_1(t)   \rho(t)\bigl(F_2(t^{-1}g)\bigr)\, dt,\qquad
g\in G,
$$
and we have
$$
T_{F_1} \ast T_{F_2} = T_{F_1\ast F_2}.
$$
Similarly, given any $F\in K(G, N^\otimes)$, we may define 
$F^*\in K(G, N^\otimes)$  by
$$
F^*(g) =\Delta_G(g)^{-1} \rho(g)\bigl(F(g^{-1})^*\bigr),\qquad g\in G,
$$
and we have
$$
T_F^*=T_{F^*}.
$$
Using these definitions, we see that 
\begin{equation}\label{4F*F}
(F^*\ast F)(g) = \int_G \Delta_G(t)^{-1}\rho(t)\bigl(F(t^{-1})^*F(t^{-1}g)\bigr)\, dt,\qquad g\in G.
\end{equation}
It actually follows from \cite[Lemma X.1.8]{Tak2} that
$$
\B : =\bigl\{T_F\, :\, F\in K(G,N^\otimes)\bigr\}
$$
is a $w^*$-dense $*$-subalgebra of $M$.

We let $\psi$ be the dual weight of $\tau_\otimes$ on $M$.  This weight is normal,
semi-finite and faithful \cite[Lemma X.1.18]{Tak2}, so that 
$(M,\psi)$ is a weighted von Neumann algebra. Moreover
$$
\B\subset \N_\psi
$$
and for any
$F\in K(G, N^\otimes)$, we have
$$
\psi\bigl(T_F^* T_F\bigr) = \tau_\otimes\bigl((F^*\ast F)(\epsilon)\bigr),
$$
see \cite[Theorem X.1.17,(i)]{Tak2}. This is an analogue 
of (\ref{3e}). According to \cite[Theorem X.1.17,(ii)]{Tak2} and (\ref{3Module0}),
the modular automorphism group $\sigma^\psi$ satisfies:

\begin{equation}\label{4Sigma-Dual}
\forall\, x\in N^\otimes,\quad
\sigma^\psi_r(x)=x
\qquad\hbox{and}\qquad
\forall\, y\in VN(G), \quad
\sigma^\psi_r(y\otimes 1_\otimes) =\sigma^G_r(y)\otimes 1_\otimes,
\end{equation}
for all $r\in\Rdb$.

We define a $w^*$-continuous unital $*$-homomorphism
$$
J\colon VN(G)\longrightarrow  M,\qquad J(y)=y\otimes 1_\otimes
\quad\hbox{for all}\  y\in VN(G).
$$
It follows from the second part of (\ref{4Sigma-Dual}) that
\begin{equation}\label{4Comm}
\sigma^\psi\circ J=J\circ \sigma^G.
\end{equation}
Let $f\in K(G)$ and let 
$F=f\otimes 1_\otimes\in K(G,N^\otimes)$. Then 
$J(\lambda(f))=T_F$ and we both have $F^*\ast F=(f^*\ast f)\otimes 1_\otimes$ and 
$J(\lambda(f)^*\lambda(f))=T_{F^*\ast F}$. Therefore,
$$
\psi\bigl(J(\lambda(f)^*\lambda(f))\bigr) = 
\tau_\otimes\bigl((F^*\ast F)(\epsilon)\bigr) = (f^*\ast f)(\epsilon) = \varphi_G
\bigl(\lambda(f)^*\lambda(f)\bigr),
$$
 by (\ref{3e}).
We deduce, using (\ref{4Comm}) and Lemma \ref{1PT},
that $J$ is weight-preserving.

Let us record two remarkable properties of $J$. First, we have
\begin{equation}\label{4Disjoint}
\psi\bigl(xJ(y)\bigr)=\tau_\otimes(x) \varphi_G(y),\qquad  x\in N^\otimes,\ y\in\M_{\varphi_G}.
\end{equation}
To prove this, consider $x\in N^\otimes\setminus\{0\}$   and define
$$
\gamma\colon VN(G)^+\longrightarrow [0,\infty],\qquad 
\gamma(y)=\tau_\otimes(x^*x)^{-1} \psi\bigl(xJ(y)x^*\bigr).
$$
This is a normal semi-finite weight.

Given any $f\in K(G)$, 
let $F\in K(G,N^\otimes)$ be defined by
$$
F(g) = \Delta_{G}(g)^{-1}\overline{f(g^{-1})}\rho(g)(x^*),\qquad g\in G.
$$
This function is chosen in such a way that $F^*=f\otimes x$. Thus, we have $T_F^*=T_{F^*} = xJ(\lambda(f))$. Consequently,
$$
\gamma\bigl(\lambda(f)\lambda(f)^*\bigr)=\tau_\otimes(x^*x)^{-1} 
\psi\bigl(T_{F}^*T_F\bigr) = \tau_\otimes(x^*x)^{-1} 
\tau_\otimes\bigl((F^*\ast F)(\epsilon)\bigr).
$$
Using (\ref{4F*F}), we compute 
$$
(F^*\ast F)(\epsilon) = \norm{f}_2 \,xx^*.
$$
We deduce that 
$$
\gamma\bigl(\lambda(f)\lambda(f)^*\bigr) = \varphi_G\bigl(\lambda(f)\lambda(f)^*\bigr).
$$
By the first part of (\ref{4Sigma-Dual}), $x$ belongs to the centralizer of $\psi$, hence
$$
\gamma(\,\cdotp)=\tau_\otimes(x^*x)^{-1} \psi\bigl(J(\,\cdotp)x^*x\bigr),
$$
by \cite[Theorem VIII.2.6]{Tak2}. Applying
(\ref{4Comm}), we deduce that $\gamma\circ\sigma^G=\gamma$.
Then arguing as in the proof of Lemma \ref{3Preserve}, (3), we infer  that $\gamma=\varphi_G$.
Using the fact that $x^*$ belongs to the centralizer of $\psi$, this yields
$\psi\bigl(x^*x J(y)\bigr)=\tau_\otimes(x^*x) \varphi_G(y)$
for all $y\in\M_{\varphi_G}$. The identity (\ref{4Disjoint}) follows at once.

Let us now show that
\begin{equation}\label{4Magic}
\Edb_J\bigl(xJ(y))= \tau_\otimes(x)y,\qquad x\in N^\otimes,\ y\in VN(G).
\end{equation}
Since $J$ and $\Edb_J$ are $w^*$-continuous, we may assume in the proof of (\ref{4Magic}) 
that $y\in\M_{\varphi_G}$. Let $m\in \M_{\varphi_G}^\infty$.
Since $J$ is a weight-preserving $*$-homomorphism and $J$ satisfies (\ref{4Comm}), we easily obtain, as in the proof of
Lemma \ref{EJ=J1*}, that
$$
J(y)\in  \M_{\psi}
\qquad\hbox{and}\qquad
J(m)\in  \M_{\psi}^\infty.
$$
Since $x$ belongs to the centralizer of $\psi$, the product $xJ(y)$ belongs to  $\M_{\psi}$, by \cite[Theorem VIII.2.6]{Tak2}.
Using Lemmas \ref{EJ=J1*} and \ref{2GL2}, we deduce that
\begin{align*}
\bigl\langle \Edb_J\bigl(xJ(y)), D_G^\frac12 m D_G^\frac12\bigr\rangle_{VN(G), L^1(VN(G))}
& = \bigl\langle xJ(y), J_1\bigl(D_G^\frac12 m D_G^\frac12\bigr)\bigr\rangle_{M, L^1(M)}\\
& = \bigl\langle xJ(y), D_\psi^\frac12 J(m) D_\psi^\frac12\bigr)\bigr\rangle_{M, L^1(M)}\\
& = {\rm Tr}_\psi\bigl(xJ(y)D_\psi^\frac12 J(m) D_\psi^\frac12\bigr)\\
&=\psi\bigl(xJ(y)\sigma^\psi_{-\frac{i}{2}}(J(m))\bigr).
\end{align*}
Using (\ref{4Comm}) again and (\ref{4Disjoint}), we have
$$
\psi\bigl(xJ(y)\sigma^\psi_{-\frac{i}{2}}(J(m))\bigr)
=\psi\bigl(xJ \bigl(y\sigma^G_{-\frac{i}{2}}(m)\bigr)\bigr) = \tau_\otimes (x) \varphi_G\bigl(
y\sigma^G_{-\frac{i}{2}}(m)\bigr).
$$
By Lemmas \ref{EJ=J1*} and \ref{2GL2} again, 
$$
\varphi_G\bigl(y\sigma^\psi_{-\frac{i}{2}}(m)\bigr)= {\rm Tr}_{\varphi_G}\bigl(yD_G^\frac12 mD_G^\frac12\bigr)
=\bigl\langle \Edb_J(y), D_G^\frac12 m D_G^\frac12\bigr\rangle_{VN(G), L^1(VN(G))}.
$$
Altogether, we obtain that
$$
\bigl\langle \Edb_J\bigl(xJ(y)), D_G^\frac12 m D_G^\frac12\bigr\rangle_{VN(G), L^1(VN(G))}
= \tau_\otimes (x) \bigl\langle \Edb_J(y), D_G^\frac12 m D_G^\frac12\bigr\rangle_{VN(G), L^1(VN(G))}.
$$
Since $D_G^\frac12 \M_{\varphi_G}^\infty D_G^\frac12$ is dense in $L^1(VN(G))$, see Lemma \ref{2GL0},
we deduce (\ref{4Magic}).

We now aim at constructing a suitable $U\colon M\to M$. We proceed in two steps.

We let ${\mathfrak s}\colon N^\otimes\to N^\otimes$ be the 
$*$-automorphism such that 
$$
{\mathfrak s}\bigl(\mathop{\otimes}_{k} x_k \bigr)=\mathop{\otimes}_{k} x_{k-1}
$$
for all finite families $(x_k)_{k}$ in $N$. Clearly ${\mathfrak s}$ is trace-preserving, i.e.
\begin{equation}\label{4TP}
\tau_\otimes \circ {\mathfrak s} = \tau_\otimes.
\end{equation}
Moreover $\rho(g)\circ {\mathfrak s}={\mathfrak s}\circ \rho(g)$ for 
all $g\in G$. This commutation property ensures, by \cite[Theorem 4.4.4]{Sunder}, 
that there exists a 
(necessarily unique) $*$-automorphism 
$\widetilde{\mathfrak s}\colon M\to M$ such that
$$
\forall\, x\in N^\otimes,\quad
\widetilde{\mathfrak s}(x) = {\mathfrak s}(x)
\qquad\hbox{and}\qquad
\forall\, y\in VN(G), \quad \widetilde{\mathfrak s}(y\otimes 1_\otimes) 
=y\otimes 1_\otimes.
$$It follows from these identities and (\ref{4Sigma-Dual}) that
\begin{equation}\label{4Comm-bis}
\sigma^\psi\circ \widetilde{\mathfrak s}=\widetilde{\mathfrak s}\circ \sigma^\psi.
\end{equation}
Let us  show that $\widetilde{\mathfrak s}$ is weight-preserving. Let 
$F\in K(G,N^\otimes)$ and let $r\in\Rdb$. Then
$$
\sigma^\psi_r(T_F)= \int_G \sigma^\psi_r(F(g)) 
\sigma^\psi_r(\lambda(g)\otimes 1_\otimes)\, dg\,
= \int_G F(g) \Delta_G(g)^{ir}(\lambda(g)\otimes 1_\otimes)\, dg\,
=T_{ \Delta_G^{ir}F}.
$$
Hence $\B$ is $\sigma^\psi$-invariant. Using Lemma \ref{1PT}, (\ref{4Comm-bis}), 
and the fact that $\B$ 
is a $w^*$-dense $*$-subalgebra of $M$, 
it therefore suffices to show that
\begin{equation}\label{400}
(\psi\circ \widetilde{\mathfrak s})\bigl(T_F^*T_F\bigr) = 
\psi\bigl(T_F^*T_F\bigr),\qquad F\in K(G,N^\otimes),
\end{equation}
to obtain that $\psi\circ\widetilde{\mathfrak s}=\psi$.
To prove this, consider $F\in K(G,N^\otimes)$ and write
$$
\widetilde{\mathfrak s}\bigl(T_F^*T_F\bigr)
=\widetilde{\mathfrak s}\Bigl(\int_G (F^*\ast F)(g)(\lambda(g)\otimes 1_\otimes)\, dg\Bigr)\ 
=\int_G {\mathfrak s}\bigl((F^*\ast F)(g)\bigr)(\lambda(g)\otimes 1_\otimes)\, dg.
$$
Since ${\mathfrak s}\circ\rho(t) = \rho(t)\circ{\mathfrak s}$, we deduce, using (\ref{4F*F}), that 
\begin{align*}
{\mathfrak s}\bigl((F^*\ast F)(g)\bigr) &=
 \int_G \Delta_G(t)^{-1}\rho(t)\bigl({\mathfrak s}\bigl(F(t^{-1})^*\bigr)
{\mathfrak s}\bigl(F(t^{-1}g)\bigr)\bigr)\, dt\\
& = \bigl[({\mathfrak s}\circ F)^*\ast ({\mathfrak s}\circ F)\bigr](g),\qquad g\in G.
\end{align*}
Consequently,
\begin{equation}\label{4Compo}
\widetilde{\mathfrak s}\bigl(T_F^*T_F\bigr) =T_{{\mathfrak s}\circ F}^* T_{{\mathfrak s}\circ F}.
\end{equation}
We deduce, using  (\ref{4TP}), that 
\begin{align*}
(\psi\circ \widetilde{\mathfrak s})\bigl(T_F^*T_F\bigr) & = \tau_\otimes\bigl(
[{\mathfrak s}\circ F)^*\ast ({\mathfrak s}\circ F)](\epsilon)\bigr)\\
& = \tau_\otimes\bigl({\mathfrak s}\bigl((F^*\ast F)(\epsilon)\bigr)\bigr)\\
& = \tau_\otimes\bigl((F^*\ast F)(\epsilon)\bigr)\\
& = \psi\bigl(T_F^*T_F\bigr),
\end{align*}
which proves (\ref{400}).

For any $g\in G$, let $\delta_g\in N^\otimes$ be defined by 
$$
\delta_g= \cdots  1\otimes\cdots\otimes 1\otimes
w(h_g)\otimes 1\otimes\cdots \otimes 1\otimes \cdots,
$$
where $w(h_g)$ is in position $0$. 
By assumption,
$M_u$ is unital hence $\norm{h_\epsilon}_{H_u}= u(\epsilon)=1$, see Lemma \ref{3Preserve}, (2).
Consequently, $w(h_\epsilon)$ is a unitary of $N$ hence $\delta_\epsilon$ is a self-adjoint unitary of $N^\otimes\subset M$.
Let $U\colon M\to M$ be defined by
$$
U(z) = \delta_\epsilon\,\widetilde{\mathfrak s}(z) \delta_\epsilon,\qquad z\in M.
$$
Then $U$ is a $*$-automorphism. By the first part of (\ref{4Sigma-Dual}), $\delta_\epsilon$ belongs to the centralizer 
of $\psi$, hence we may write $\psi(U(y)) = \psi(\widetilde{\mathfrak s}(y)\delta_\epsilon^2)
=\psi(\widetilde{\mathfrak s}(y)) = \psi(y)$. Thus we have
$$
\psi\circ U = \psi.
$$

Let us now check that $T=M_u$ satisfies (\ref{1Formula}).
We observe that by (\ref{4Action}), we have 
$$
\rho(g)(\delta_t)= \delta_{gt},\qquad g,t\in G.
$$
Applying (\ref{4NewAction}), we deduce that 
\begin{equation}\label{4Permute}
(\lambda(g)\otimes 1_\otimes) \delta_\epsilon 
=  \delta_g (\lambda(g)\otimes 1_\otimes),\qquad g\in G.
\end{equation}
Let $g\in G$. Then 
$\widetilde{\mathfrak s}(\lambda(g)\otimes 1_\otimes)= \lambda(g)\otimes 1_\otimes,$
hence by (\ref{4Permute}), we have 
$$
UJ(\lambda(g)) = \delta_\epsilon\delta_g (\lambda(g)\otimes 1_\otimes).
$$
Next,
$$
\widetilde{\mathfrak s}(\delta_\epsilon\delta_g) = \cdots  1\otimes\cdots\otimes 1\otimes
w(h_\epsilon)w(h_g)\otimes 1\otimes\cdots \otimes 1\otimes \cdots,
$$
where $w(h_\epsilon)w(h_g)$ is in position $1$. Hence $\delta_\epsilon$ commutes with 
$\widetilde{\mathfrak s}(\delta_\epsilon\delta_g)$ and using (\ref{4Permute}) again, we obtain
$$
U^2J(\lambda(g)) = \delta_\epsilon \widetilde{\mathfrak s}(\delta_\epsilon\delta_g) ( \lambda(g)\otimes 1_\otimes)\delta_\epsilon
= \widetilde{\mathfrak s}(\delta_\epsilon\delta_g)\delta_\epsilon\delta_g (\lambda(g)\otimes 1_\otimes).
$$
More explicitly,
$$
U^2J(\lambda(g)) = \bigl(\cdots\otimes 1\otimes w(h_\epsilon)w(h_g)\otimes
w(h_\epsilon)w(h_g)\otimes 1\otimes \cdots\bigr)(\lambda(g)\otimes 1_\otimes),
$$
with $w(h_\epsilon)w(e_g)$ in positions $0$ and $1$.
Then we obtain by induction that for all integer $k\geq 0$, we have
$$
U^kJ(\lambda(g)) = \bigl(\cdots\otimes 1\otimes w(h_\epsilon)w(e_g)\otimes\cdots\otimes
w(h_\epsilon)w(e_g)\otimes 1\otimes\cdots\bigr)(\lambda(g)\otimes 1_\otimes),
$$
with $w(h_\epsilon)w(e_g)$ in positions $0,\ldots, k-1$.
According to (\ref{4Magic}) and (\ref{4Inner}), this implies that 
\begin{align*}
\Edb_J U^k J(\lambda(g)) & = \tau_\otimes
\bigl(\cdots\otimes 1\otimes w(h_\epsilon)w(e_g)\otimes\cdots\otimes
w(h_\epsilon)w(e_g)\otimes 1\otimes\cdots\bigr)\lambda(g)\\
& = \tau\bigl(w(h_\epsilon)w(h_g)\bigr)^k \lambda(g)\\
& = (h_\epsilon\vert h_g)_{H_u}^k \lambda(g)\\
& = u(g)^k\lambda(g).
\end{align*}
Thus,  $\Edb_J U^k J(\lambda(g))=M_u^k(\lambda(g))$ for all $k\geq 0$ and all  $g\in G$.
Since the operators $M_u, J, U$ and $\Edb_J$ are all $w^*$-continuous and
${\rm Span}\{\lambda(g)\, :\, g\in G\}$ is $w^*$-dense in $VN(G)$, we deduce (\ref{1Formula}) for all $k\geq 0.$
\end{proof}

If we restrict to the case when $G$ is unimodular, the above proof is different and somewhat 
simpler than the one
in \cite{D} or \cite{Ar1}. Indeed, we do not use \cite[Lemma 2.7]{D}.

\begin{remark}\label{4Multi}
We observe (as in \cite[Theorem 6.3]{D}) that we can simulteously dilate any finite family of
Fourier multipliers satisfying the assumptions of Theorem \ref{4AD}. More explicitly,
let $u_1,\ldots,u_n\in C_b(G)$ such that 
$M_{u_j}\colon VN(G)\to VN(G)$ is 
unital and completely positive and
$u_j$ is real valued, for all $j=1,\ldots,n$. Then
 there exists a
weighted von Neumann algebra $(M,\psi)$, a
$w^*$-continuous, unital and weight-preserving
$*$-homomorphism  $J\colon VN(G)\to M$
such that $\sigma^\psi\circ J=J\circ \sigma^\varphi$, 
as well as a commuting family $(U_1,\ldots,U_n)$
of weight-preserving
$*$-automorphisms  on $M$,  such that
$$
\forall\, k_1,\ldots,k_n\geq 0,\qquad M_{u_1}^{k_1}\cdots  M_{u_n}^{k_n}= \Edb_J\,  U_1^{k_1}\cdots  U_n^{k_n} J.
$$

\end{remark}

\bigskip
\section{An application to functional calculus}\label{5FC}   
The aim of this section is to establish a functional calculus property of the 
$L^p$-realization of certain Fourier multipliers  $M_u\colon VN(G)\to VN(G)$.

Let $\Ddb=\{z\in \Cdb\, :\, \vert z \vert<1\}$ be the open unit disc. Let
$X$ be a Banach space and let $T\in B(X)$. We say that $T$ is power bounded 
if there exists a constant $C_0\geq 1$ such that $\norm{T^k}\leq C_0$ 
for all integer $k\geq 0$. Let $\sigma(T)$ denote the spectrum of $T$.
We say that $T$ is a Ritt operator if $\sigma(T)\subset\overline{\Ddb}$ and 
there exists a constant $C>0$ such that
$$
\bignorm{(z-T)^{-1}}\leq\,\frac{C}{\vert z-1\vert}\,,\qquad z\in\Cdb\setminus\overline{\Ddb}.
$$
A classical result (see e.g. \cite{Vitse}) asserts that $T$ is a Ritt operator if and only if 
$T$ is power bounded and 
there exists a constant $C_1\geq 1$ such that 
$$
n \bignorm{T^{n}- T^{n-1}}\leq C_1,\qquad n\geq 1.
$$

For any $\gamma\in\bigl(0,\frac{\pi}{2}\bigr)$, let $B_\gamma$ be the 
interior of the convex hull of $1$ and the disc $\{\vert z\vert <\sin\gamma\}$, see Figure \ref{f1} below. 
It is well-known that the spectrum of any Ritt operator $T$ is included in the 
closure of one of the $B_\gamma$. 

\begin{figure}[ht]
\vspace*{2ex}
\begin{center}
\includegraphics[scale=0.4]{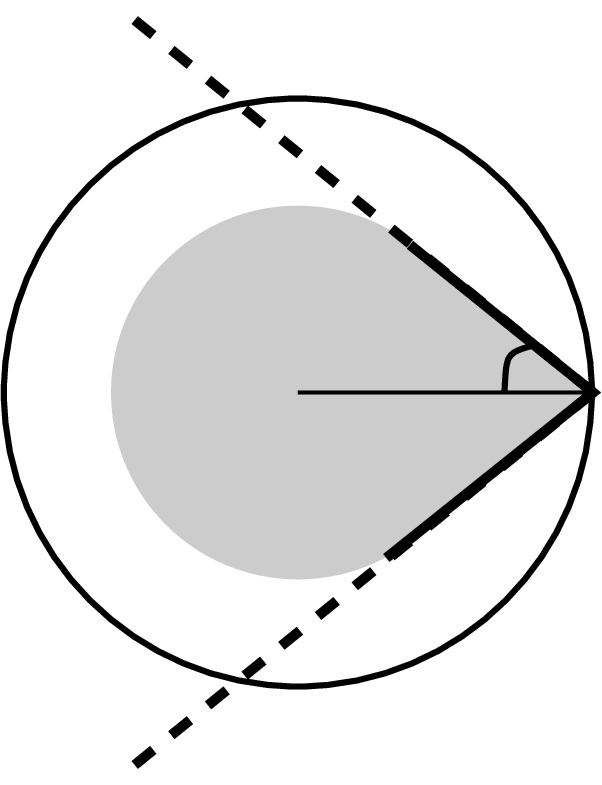}
\begin{picture}(0,0)
\put(-2,65){{\footnotesize $1$}}
\put(-68,65){{\footnotesize $0$}}
\put(-32,81){{\footnotesize $\gamma$}}
\put(-55,85){{\small $B_\gamma$}}
\end{picture}
\end{center}
\caption{\label{f1} Stolz domain}
\end{figure}

Let $\P$ be the algebra of complex polynomials and let $T\in B(X)$ be a Ritt operator.
For any $\gamma\in\bigl(0,\frac{\pi}{2}\bigr)$, we say that $T$ admits a bounded
$H^\infty(B_\gamma)$-functional calculus if there exists a constant $K\geq 1$ such that
$$
 \norm{F(T)}\leq K\sup\bigl\{\vert F(z)\vert\, :\, z\in B_\gamma\bigr\},\qquad F\in\P.
$$
Then we simply say that $T$ admits a bounded
$H^\infty$-functional calculus if it admits a bounded
$H^\infty(B_\gamma)$-functional calculus for some $\gamma\in\bigl(0,\frac{\pi}{2}\bigr)$.
This implies that $T$ is polynomially bounded, that is, $T$ satisfies an estimate
$\norm{F(T)}\leq K\sup\bigl\{\vert F(z)\vert\, :\, z\in \Ddb\bigr\}$ for all $F\in\P$.

Ritt operators and their $H^\infty$-functional calculus have attracted a lot of attention in the last 
twenty years. We refer the reader to \cite{ALM, AFL, Blu, Dun, GT, Haak, HH, LM, Vitse} 
and the reference therein for more information.

The main result of this section is the following theorem.
Note that in this statement, the multiplier $u$ is not only a real 
valued function but a nonnegative one.

\begin{theorem}\label{5FC-Ritt}
Let $G$ be a locally compact group, let 
$u\colon G\to [0,1]$ be a continuous function, and assume that 
$M_u\colon VN(G)\to VN(G)$ is 
unital and completely positive. Then for all 
$1<p<\infty$, $M_{u,p}\colon
L^p(VN(G),\varphi_G)\to L^p(VN(G),\varphi_G)$
is a Ritt operator which admits a bounded
$H^\infty$-functional calculus.
\end{theorem}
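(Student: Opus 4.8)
The plan is to combine the three facts already at our disposal: the self-adjointness and positivity of $M_{u,2}$ furnished by Lemma \ref{3SA}, the absolute dilation of $M_u$ furnished by Theorem \ref{4AD}, and the consequent isometric $p$-dilations of the $L^p$-realizations furnished by Proposition \ref{2LpDil}. First I would settle the case $p=2$. Since $u$ is valued in $[0,1]$, Lemma \ref{3SA} says that $M_{u,2}$ is a positive self-adjoint contraction on the Hilbert space $L^2(VN(G),\varphi_G)$, so that $\sigma(M_{u,2})\subset[0,1]$. The identity $\norm{(z-M_{u,2})^{-1}}={\rm dist}(z,\sigma(M_{u,2}))^{-1}$, valid for the self-adjoint operator $M_{u,2}$, together with the elementary estimate ${\rm dist}(z,[0,1])\geq\tfrac12\vert z-1\vert$ for $z\in\Cdb\setminus\overline{\Ddb}$, shows that $M_{u,2}$ is a Ritt operator. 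Moreover the Borel functional calculus gives $\norm{F(M_{u,2})}=\sup\{\vert F(t)\vert:t\in\sigma(M_{u,2})\}\leq\sup\{\vert F(t)\vert:t\in[0,1]\}$ for every polynomial $F$, and since $[0,1]\subset\overline{B_\gamma}$ for every $\gamma$, this yields a bounded $H^\infty(B_\gamma)$-functional calculus of constant $1$ on $L^2$ for each $\gamma\in\bigl(0,\frac{\pi}{2}\bigr)$.

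For an arbitrary $1<p<\infty$, I would first show that $M_{u,p}$ is a Ritt operator and then deduce the functional calculus from the dilation. By Proposition \ref{2Haag} the operator $M_{u,p}$ is a positive contraction, hence power bounded, and the family $(M_{u,p})_{1<p<\infty}$ is consistent with the self-adjoint operator $M_{u,2}$. The delicate half of Vitse's criterion (\cite{Vitse}) is the estimate $\sup_n n\norm{M_{u,p}^n-M_{u,p}^{n-1}}<\infty$: this does \emph{not} follow from a naive Riesz--Thorin interpolation of the differences $M_{u,p}^n-M_{u,p}^{n-1}$, which loses the $O(1/n)$ decay away from $p=2$. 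Instead I would exploit the absolute dilation of Theorem \ref{4AD}, which, via the Haagerup--Junge--Xu functoriality (Proposition \ref{2Haag} and Remark \ref{2Iso}), provides at every $p$ a surjective isometry $U_p$ on $L^p(M,\psi)$ coming from a weight-preserving $*$-automorphism, together with contractions $J_p,Q_p$ satisfying $M_{u,p}^k=Q_pU_p^kJ_p$. Realizing $M_{u,p}$ as a compression of a single automorphism-isometry allows one to transfer the analyticity encoded by the spectral theorem at $p=2$ to general $p$, so that $I-M_{u,p}$ is sectorial of an angle staying below $\frac{\pi}{2}$ and $M_{u,p}$ is ($R$-)Ritt.

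The bounded $H^\infty$-functional calculus would then be obtained through the square function characterization of \cite{LM}: a Ritt operator on a non-commutative $L^p$-space admits a bounded $H^\infty(B_\gamma)$-calculus for some $\gamma$ as soon as both it and its adjoint satisfy uniform square function estimates of the type $\bignorm{\bigl(\sum_{k\geq1}k\vert(M_{u,p}^k-M_{u,p}^{k-1})(x)\vert^2\bigr)^{1/2}}_p\lesssim\norm{x}_p$. At $p=2$ these are immediate from the spectral theorem for the positive contraction $M_{u,2}$; because $M_{u,2}$ is self-adjoint, the adjoint operator falls under the same scheme, so no separate argument is needed there. The passage to general $p$ is precisely where the automorphism $U$ of Theorem \ref{4AD} is used: transferring the estimate along the dilation $M_{u,p}^k=Q_pU_p^kJ_p$ and interpolating against the $p=2$ bound is the mechanism developed in \cite{ALM, LM} for absolutely dilatable operators.

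I expect the main obstacle to be exactly this transfer of the Ritt property and of the square function estimates from the Hilbertian case $p=2$ to $p\neq 2$. The self-adjoint positive case is essentially free, and power boundedness is automatic from Proposition \ref{2Haag}; the difficulty is that upgrading these to the analyticity estimate at $p\neq 2$ genuinely requires the weight-preserving $*$-automorphism produced by Theorem \ref{4AD}, rather than merely the Banach-space isometric $p$-dilation of Proposition \ref{2LpDil}, so that the $O(1/n)$ decay and the square function bounds survive the interpolation off $L^2$.
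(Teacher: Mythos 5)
Your $p=2$ analysis is correct and matches the paper, but the passage to $p\neq 2$ contains a genuine error at its central step. You claim that realizing $M_{u,p}$ as a compression of the automorphism-isometry $U_p$ ``transfers the analyticity'' from $L^2$, so that $I-M_{u,p}$ becomes sectorial of angle below $\frac{\pi}{2}$ and $M_{u,p}$ is Ritt. An isometric dilation can do no such thing: every surjective isometry dilates itself, and surjective isometries (e.g.\ unitaries with spectrum containing $-1$) are typically not Ritt; more to the point, Theorem \ref{4AD} applies to any real-valued symbol $u$ with values in $[-1,1]$, and for such $u$ the operator $M_{u,p}$ need not be Ritt at all. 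What an isometric $p$-dilation actually yields, via \cite[Proposition 6.2]{ALM} (the transference result the paper invokes), is a bounded $H^\infty(\Sigma_{\theta_0})$ functional calculus for $I-M_{u,q}$ only for angles $\theta_0>\frac{\pi}{2}$ --- strictly on the wrong side of $\frac{\pi}{2}$ for the Ritt conclusion. The hypothesis $u\geq 0$ enters precisely to repair this: it gives $\sigma(M_{u,2})\subset[0,1]$, hence an $H^\infty(\Sigma_{\theta_1})$ calculus for $I-M_{u,2}$ with $\theta_1<\frac{\pi}{2}$, and the paper then pushes the angle below $\frac{\pi}{2}$ at exponent $p$ by the Kalton--Kunstmann--Weis interpolation theorem \cite[Proposition 4.9]{KKW} applied to $L^p\simeq[L^q,L^2]_\alpha$, choosing $q>p$ and $\theta_0,\theta_1$ suitably. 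Proposition \ref{5Ritt-Sect} then converts this into the $H^\infty$-calculus for the Ritt operator $M_{u,p}$. Your square-function alternative via \cite{LM} is not developed enough to substitute for this: that characterization requires $R$-Rittness and square function estimates for both $M_{u,p}$ and its adjoint at the exponent $p$, and you offer no mechanism to produce either beyond the same unsupported transfer claim.

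You also dismiss exactly the tool the paper uses for the Ritt property itself. Your objection that interpolating the differences $M_{u,p}^n-M_{u,p}^{n-1}$ ``loses the $O(1/n)$ decay'' applies only to a naive term-by-term Riesz--Thorin argument; Blunck's theorem (abstracted in Proposition \ref{5Ritt-Inter}, see \cite[Theorem 1.1]{Blu}) shows that a Ritt operator on $X_0$ which is merely power bounded on $X_1$ is Ritt on $[X_0,X_1]_\alpha$ for $\alpha\in[0,1)$. The paper applies this with the Terp couple $(A,L^1(A,\varphi))$ and the reiteration identity $L^p(A,\varphi)\simeq[A,L^2(A,\varphi)]_{2/p}$: Rittness of $M_{u,2}$ plus contractivity of $M_u$ on $VN(G)$ already yields that $M_{u,p}$ is Ritt, with no appeal to the dilation. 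The dilation of Theorem \ref{4AD} is needed only for the functional calculus half, and even there only through \cite[Proposition 6.2]{ALM} combined with the interpolation of calculus angles; your proposal inverts this division of labor and rests the analyticity on a step that fails.
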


We need sectorial operators and their  $H^\infty$
functional calculus. We refer to
\cite[Chapter 10]{HVVW} for background and undefined terminology 
on this topic. As in this reference, we set 
$$
\Sigma_\theta=\bigl\{z\in\Cdb\setminus\{0\}\, :\, \vert{\rm Arg}(z)\vert<\theta\},
$$
for any $\theta\in(0,\pi)$.
The following was proved in  \cite[Proposition 4.1]{LM}.

\begin{proposition}\label{5Ritt-Sect}
Let $T\in B(X)$ be a Ritt operator. Then $I_X-T$ is a sectorial operator. If there exists 
$\theta\in\bigl(0,\frac{\pi}{2}\bigr)$ such that $I_X-T$ admits a bounded
$H^\infty(\Sigma_\theta)$ functional calculus, then $T$ admits a 
bounded $H^\infty$-functional calculus.
\end{proposition}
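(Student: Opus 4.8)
The statement splits into two independent claims, which I would treat separately; throughout put $A=I_X-T$.

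\textbf{Sectoriality of $A$.} Since $T$ is a Ritt operator, a standard self-improvement of the resolvent bound in the definition (see e.g. \cite{Vitse}) shows that, choosing $\gamma$ with $\sigma(T)\subset\overline{B_\gamma}$, for every $\gamma'\in\bigl(\gamma,\frac{\pi}{2}\bigr)$ one has
\[
\norm{(z-T)^{-1}}\le \frac{C_{\gamma'}}{\vert z-1\vert},\qquad z\in\Cdb\setminus\overline{B_{\gamma'}}.
\]
The plan is to transport this to the sector through the affine involution $z\mapsto 1-z$. First, since the two boundary rays of $B_{\gamma'}$ issuing from the vertex $1$ make angle $\gamma'$ with the real axis, this map sends $\overline{B_{\gamma'}}$ into $\overline{\Sigma_{\gamma'}}$; hence $\sigma(A)=1-\sigma(T)\subset\overline{\Sigma_\gamma}$. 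Second, from $\mu I_X-(I_X-T)=-\bigl((1-\mu)I_X-T\bigr)$ I read off the resolvent identity $(\mu-A)^{-1}=-\bigl((1-\mu)-T\bigr)^{-1}$. Writing $z=1-\mu$, so that $\vert z-1\vert=\vert\mu\vert$, and using that $\mu\notin\overline{\Sigma_{\gamma'}}$ forces $z\notin\overline{B_{\gamma'}}$ (the contrapositive of the inclusion above), the displayed bound becomes $\norm{\mu(\mu-A)^{-1}}\le C_{\gamma'}$ for all $\mu\notin\overline{\Sigma_{\gamma'}}$. Letting $\gamma'\downarrow\gamma$ exhibits $A$ as a sectorial operator of angle $\le\gamma<\frac{\pi}{2}$.

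\textbf{Transfer of the $H^\infty$ calculus.} Here the goal is to deduce, from a bounded $H^\infty(\Sigma_\theta)$ calculus for $A$ with $\theta<\frac{\pi}{2}$, the estimate $\norm{F(T)}\le K\sup_{B_\gamma}\vert F\vert$ for all $F\in\P$ and a suitable $\gamma\in\bigl(\theta,\frac{\pi}{2}\bigr)$. The obvious substitution rewrites $F(T)=G(A)$ with $G(\lambda)=F(1-\lambda)$; but $G$ is a nonconstant polynomial, hence \emph{unbounded} on the unbounded sector $\Sigma_\theta$, so the sectorial bound $\norm{G(A)}\le K\norm{G}_{H^\infty(\Sigma_\theta)}$ is vacuous and cannot be invoked directly. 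Reconciling this mismatch — between the bounded Stolz domain on which $F$ lives and the unbounded sector on which the calculus of $A$ operates — is the heart of the matter, and is where the boundedness of $A$ together with $\theta<\frac{\pi}{2}$ must be used: the two regions agree near their common vertex ($0$ for $A$, $1$ for $T$), which is exactly where the calculi carry nontrivial information, the behaviour of $A$ at infinity being trivial because $A$ is bounded.

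One way I would carry out this reconciliation is to pass through square functions. A bounded $H^\infty(\Sigma_\theta)$ calculus for $A$ yields two-sided continuous square function estimates built from $\psi(tA)$, $\psi\in H_0^\infty(\Sigma_\theta)$; these I would compare with the discrete square functions $\bigl(\sum_{k\ge1}k\,\vert T^{k-1}(I_X-T)x\vert^2\bigr)^{1/2}$ attached to $T$, using $T^{k-1}(I_X-T)=(I_X-A)^{k-1}A$ and the subordination of the powers $(I_X-A)^k$ to the semigroup $e^{-kA}$. The resulting two-sided discrete estimates for $T$ are, by the Le Merdy--Xu characterisation \cite{LMX}, equivalent to a bounded $H^\infty$ calculus for $T$, which is the conclusion. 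The main obstacle is precisely this comparison step: turning the continuous square functions of the bounded sectorial operator $A$ into the discrete square functions of the Ritt operator $T$ in a quantitative, two-sided fashion, uniformly in $x$. This is the technical core of \cite[Proposition 4.1]{LM}.
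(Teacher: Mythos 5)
Your first half is correct and is the standard argument: the self-improved resolvent estimate outside the enlarged Stolz domains $\overline{B_{\gamma'}}$ (a known consequence of the Ritt condition), the identity $(\mu-A)^{-1}=-\bigl((1-\mu)-T\bigr)^{-1}$, and the geometric inclusion $1-\overline{B_{\gamma'}}\subset\overline{\Sigma_{\gamma'}}\cup\{0\}$ together give sectoriality of $A=I_X-T$ of angle $<\frac{\pi}{2}$, exactly as in the literature.

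The second half has a genuine gap, and you flag it yourself. After correctly identifying the real difficulty — $F(T)=G(A)$ with $G=F(1-\cdot)$ a polynomial, hence unbounded on $\Sigma_\theta$, so the $H^\infty(\Sigma_\theta)$ bound for $A$ cannot be invoked directly — you propose a detour through square functions and then write that the decisive comparison step ``is the technical core of \cite[Proposition 4.1]{LM}''. But \cite[Proposition 4.1]{LM} \emph{is} the statement to be proved: the paper itself gives no argument and simply cites that proposition, so a blind attempt has to supply this step, and yours instead defers to the conclusion. As written, the attempt is circular exactly where the work lies.

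Moreover, the detour you sketch could not be completed at the stated level of generality. The proposition concerns an arbitrary Banach space $X$, where the discrete square function $\bigl(\sum_{k\geq 1}k\,\vert T^{k-1}(I_X-T)x\vert^{2}\bigr)^{1/2}$ has no meaning (there is no modulus or lattice structure), and the equivalence between two-sided square function estimates and a bounded $H^\infty$ calculus is an $L^p$-space (or randomized/$R$-bounded) phenomenon — it is not what \cite{LMX} provides for general $X$, and even on $L^p$ the ``two-sided'' part requires simultaneous estimates for the operator and its adjoint, which you do not produce. The argument of \cite[Proposition 4.1]{LM} is instead a direct holomorphic functional calculus one, valid on every Banach space, and it implements precisely the observation you make in passing: since $A$ is a \emph{bounded} sectorial operator, its resolvent is uniformly bounded outside a fixed disc, so the $H^\infty(\Sigma_\theta)$ calculus effectively localizes to a truncated sector $\Sigma_{\theta'}\cap\{\vert z\vert<R\}$ with $R>\norm{A}$; composing with $z\mapsto 1-z$, which maps $B_\gamma$ onto such a region for suitable $\gamma\in\bigl(\theta',\frac{\pi}{2}\bigr)$, one obtains the estimate $\norm{F(T)}\leq K\sup_{B_\gamma}\vert F\vert$ directly on polynomials, via Dunford--Riesz integrals over boundaries of bounded sectors and approximation of $G$ by functions with decay at infinity. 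So the right fix is to carry out your ``behaviour at infinity is trivial'' remark inside the functional calculus itself, not to route through square functions.
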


The proof of Theorem \ref{5FC-Ritt} will also rely on interpolation. 
Given any interpolation couple 
$(X_0,X_1)$ of Banach spaces  and $\alpha\in[0,1]$, we let
$[X_0,X_1]_\alpha$
denote the Banach space obtained by the complex interpolation
method. We refer to \cite[Chapter 4]{BL} for background.

\begin{proposition}\label{5Ritt-Inter}
Let $(X_0,X_1)$ be an interpolation couple of Banach spaces and consider an operator
$T\colon X_0+X_1\to X_0+X_1$ such that 
$$
T\colon X_0\longrightarrow X_0
\qquad\hbox{and}\qquad
T\colon X_1\longrightarrow X_1
$$
are bounded. If $T\colon X_0\to X_0$ is a Ritt operator and $T\colon X_1\to X_1$
is power bounded, then for all $\alpha\in[0,1)$, 
$T\colon [X_0,X_1]_\alpha\to [X_0,X_1]_\alpha$
is a Ritt operator.
\end{proposition}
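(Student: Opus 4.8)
The plan is to verify the two-part characterization of Ritt operators recalled before the statement: power boundedness together with the resolvent estimate $\norm{(\lambda-T)^{-1}}\le C/\vert\lambda-1\vert$ on $\Cdb\setminus\overline{\Ddb}$. Write $Y_\alpha=[X_0,X_1]_\alpha$ and recall the basic complex-interpolation inequality $\norm{S}_{Y_\alpha}\le \norm{S}_{X_0}^{1-\alpha}\norm{S}_{X_1}^\alpha$ for any operator bounded on both $X_0$ and $X_1$. Power boundedness on $Y_\alpha$ is immediate: since $T$ is Ritt on $X_0$ it is power bounded there, say $\norm{T^k}_{X_0}\le C_0$, while $\norm{T^k}_{X_1}\le C_1$ by hypothesis, so $\norm{T^k}_{Y_\alpha}\le C_0^{1-\alpha}C_1^\alpha$ for all $k\ge 0$. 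In particular $\sigma(T)\subset\overline{\Ddb}$ on $Y_\alpha$, and for $\vert\lambda\vert>1$ the resolvent $(\lambda-T)^{-1}$ is defined on $X_0$, on $X_1$, and hence consistently on $Y_\alpha$.

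It remains to control the resolvent on $Y_\alpha$. The two inputs are the Ritt bound $\norm{(\lambda-T)^{-1}}_{X_0}\le C_0'/\vert\lambda-1\vert$ and the crude power-bounded bound $\norm{(\lambda-T)^{-1}}_{X_1}\le C_1'/(\vert\lambda\vert-1)$, both for $\vert\lambda\vert>1$. Interpolating them pointwise gives
$$\norm{(\lambda-T)^{-1}}_{Y_\alpha}\le \frac{(C_0')^{1-\alpha}(C_1')^\alpha}{\vert\lambda-1\vert^{1-\alpha}(\vert\lambda\vert-1)^\alpha},$$
and in the non-tangential region $\vert\lambda-1\vert\le 2(\vert\lambda\vert-1)$ this is already $\le 2^\alpha(C_0')^{1-\alpha}(C_1')^\alpha/\vert\lambda-1\vert$, the desired Ritt estimate. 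The serious difficulty is the complementary tangential region $\vert\lambda-1\vert>2(\vert\lambda\vert-1)$, which contains both $\lambda\to1$ tangentially and $\lambda$ approaching any boundary point $\lambda_0\ne1$; there $\vert\lambda\vert-1\ll\vert\lambda-1\vert$ and the pointwise bound is far too large. This obstacle is genuine and pointwise interpolation cannot remove it: the same failure occurs for the difference characterization, since interpolating $\norm{T^n-T^{n-1}}_{X_0}\le C/n$ against $\norm{T^n-T^{n-1}}_{X_1}\le 2C_1$ only yields $\norm{T^n-T^{n-1}}_{Y_\alpha}\le Cn^{-(1-\alpha)}$, which is strictly weaker than the required $O(1/n)$.

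To beat this I would use the analyticity of the resolvent via a Stein-type interpolation of an analytic family rather than a pointwise estimate. It is cleaner to pass to $w=1/\lambda\in\Ddb$: using $(\lambda-T)^{-1}=w(1-wT)^{-1}$ and $\vert\lambda-1\vert=\vert1-w\vert/\vert w\vert$, the target becomes $\norm{(1-wT)^{-1}}_{Y_\alpha}\le C/\vert1-w\vert$, the $X_0$-input reads $\norm{(1-wT)^{-1}}_{X_0}\le C_0'/\vert1-w\vert$, and the $X_1$-input reads $\norm{(1-wT)^{-1}}_{X_1}\le C_1'/(1-\vert w\vert)$. For a fixed target $w$ I would construct an analytic family $S_\theta=g(\theta)\,(1-w_\theta T)^{-1}$ on the strip $\{0\le\mathrm{Re}\,\theta\le1\}$, with a scalar analytic weight $g$ and an analytic path $w_\theta\in\Ddb$ normalized by $w_\alpha=w$ and $g(\alpha)=\vert1-w\vert$, designed so that on the edge $\mathrm{Re}\,\theta=0$ the point $w_{it}$ stays a fixed distance from $1$ (whence the Ritt bound controls $\norm{S_{it}}_{X_0}$) and on the edge $\mathrm{Re}\,\theta=1$ the point $w_{1+it}$ stays in a disc $\vert w\vert\le1-\delta$ (whence power boundedness controls $\norm{S_{1+it}}_{X_1}$). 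Stein's theorem then bounds $\norm{S_\alpha}_{Y_\alpha}$ by a constant independent of $w$, and since $S_\alpha=\vert1-w\vert(1-wT)^{-1}$ this is exactly $\norm{(1-wT)^{-1}}_{Y_\alpha}\le C/\vert1-w\vert$.

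The hard part will be producing the path $w_\theta$ and weight $g$, uniformly in the target $w$ and with the family bounded on the whole strip so that Stein's theorem applies; the delicate case is $w$ simultaneously close to $\partial\Ddb$ and to $1$, where the path must leave a neighbourhood of $1$ by the left edge yet retreat into $\vert w\vert\le1-\delta$ by the right edge. Once this is in place, the two edge estimates reduce respectively to the Ritt bound on $X_0$ and to power boundedness on $X_1$, and translating $\norm{(1-wT)^{-1}}_{Y_\alpha}\le C/\vert1-w\vert$ back through $\lambda=1/w$ furnishes the Ritt resolvent estimate on $Y_\alpha$ for every $\alpha\in[0,1)$ — the constant necessarily degenerating as $\alpha\to1$, consistent with $T$ being merely power bounded at the endpoint $X_1$.
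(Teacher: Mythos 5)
Your first step (power boundedness of $T$ on $[X_0,X_1]_\alpha$ via $\norm{S}_{[X_0,X_1]_\alpha}\leq\norm{S}_{X_0}^{1-\alpha}\norm{S}_{X_1}^{\alpha}$) is correct, and so is your diagnosis that naive pointwise interpolation of the resolvent fails tangentially. But the proof then rests entirely on the unproved ``hard part'', namely the existence, for every target $w\in\Ddb$, of an analytic path $\theta\mapsto w_\theta\in\Ddb$ on the strip $S=\{0\leq\mathrm{Re}\,\theta\leq1\}$ with $w_\alpha=w$, with $\vert 1-w_{it}\vert\geq\delta_0$ on the left edge and $\vert w_{1+it}\vert\leq 1-\delta_1$ on the right edge, uniformly in $w$. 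This path provably does not exist once $\vert 1-w\vert<\min(\delta_0,\delta_1)$, i.e.\ precisely in the regime the construction was designed to handle. Indeed, set $F(\theta)=1-w_\theta$: since $w_\theta\in\Ddb$, $F$ omits $0$ and takes values in the disc of center $1$ and radius $1$, which lies in the right half-plane; hence $G=\log F$ maps the width-one strip $S$ analytically into the strip $\{\vert\mathrm{Im}\,z\vert<\pi/2\}$. By the Schwarz--Pick lemma applied to this map between strips, $\vert G(\alpha+it)-G(\alpha)\vert$ grows at most linearly in $\vert t\vert$ along interior vertical lines. A harmonic function on $S$ of at most linear growth is determined by its boundary values (the Phragm\'en--Lindel\"of uniqueness class for $S$ excludes only growth of order $e^{\pi\vert t\vert}$, which is exactly the growth of the end-minimal harmonic functions $e^{\pm\pi t}\sin(\pi x)$), so $u=\log\vert F\vert$ equals the Poisson integral of its boundary values, and your two edge conditions give $u\geq\log\min(\delta_0,\delta_1)$ on $\partial S$, whence $\vert 1-w\vert=\vert F(\alpha)\vert\geq\min(\delta_0,\delta_1)$. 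So with the edge conditions as you state them the Stein scheme cannot yield the Ritt estimate uniformly near $1$; making $\delta_0,\delta_1$ depend on $w$ destroys the uniform edge bounds, and a scheme in which $w_{it}$ is allowed to approach $1$ with a compensating weight $g$ is a different construction that you neither formulate nor verify. The proposal is therefore conditional at best, and its central lemma is false as specified.

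For comparison, the paper's proof is simply a citation: Blunck proved the statement for couples $(L^p,L^q)$ in \cite[Theorem 1.1]{Blu}, and his argument is purely abstract (Riesz--Thorin enters only through the interpolation inequality above), so it transfers verbatim. Moreover, Blunck's argument contradicts your claim that ``pointwise interpolation cannot remove'' the $n^{-(1-\alpha)}$ loss: he works with the higher-order differences $T^n(I-T)^k$, which are characteristic of analyticity for power bounded operators and which interpolate to $O(n^{-k(1-\alpha)})$, and he recovers the sharp first-order bound $O(n^{-1})$ by elementary operator identities rather than analytic families --- for instance, writing $I=A_m+B_m(I-T)$ with the Ces\`aro means $A_m=\frac1m\sum_{j<m}T^j$ and $\norm{B_m}\lesssim m$, so that $\norm{T^n(I-T)}\leq\frac1m\norm{(I-T^m)T^n}+Cm\,\norm{T^n(I-T)^2}$, and then optimizing over $m$ and iterating. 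No Stein interpolation and no control of tangential resolvent behaviour is needed on that route.
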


\begin{proof}
This result was proved by Blunck in the case when $X_0=L^p(\Omega)$
and $X_1=L^q(\Omega)$ for a measure space $(\Omega,\mu)$ and  $1\leq p,q\leq \infty$, 
see \cite[Theorem 1.1]{Blu}. His arguments apply as well in this abstract setting.
\end{proof}

\begin{proof}[Proof of Theorem \ref{5FC-Ritt}]
We set $A=VN(G)$, $\varphi=\varphi_G$ and $D=D_G$ for simplcity.
We use complex interpolation for the spaces $L^p(A,\varphi)$. Following
\cite{Terp2} (see also \cite[Section 6]{CDS}, \cite{Ko} and \cite[Chapter 9]{Hiai}), 
we regard $(A, L^1(A,\varphi))$ as 
an interpolation couple of Banach spaces, using the embedding 
$\M_\varphi\to L^1(A,\varphi)$ which takes $x$ to $D^\frac12 x D^\frac12$ for any
$x\in\M_\varphi$. Then for any $1\leq p\leq \infty$, the 
mapping $D^{\frac{1}{2p}}\M_\varphi D^{\frac{1}{2p}} \to L^1(A,\varphi)$
which takes $D^{\frac{1}{2p}}xD^{\frac{1}{2p}}$ to $D^\frac12 x D^\frac12$ for any
$x\in\M_\varphi$ extends to an isometric isomorphism
\begin{equation}\label{5Terp}
L^p(A,\varphi)\simeq\bigl[A, L^1(A,\varphi)\bigr]_{\frac{1}{p}}.
\end{equation}
Furthermore, if $V\colon A\to A$ is a weight-preserving positive map, we can define
$T\colon A+L^1(A,\varphi)\to A+L^1(A,\varphi)$ 
by $T(x+y)=V(x)+V_1(y)$ for all $(x,y)\in A\times L^1(A,\varphi)$ and the interpolation
operator on $[A, L^1(A,\varphi)]_{\frac{1}{p}}$ induced by $T$ coincides with
$V_p$ though the identification (\ref{5Terp}).

Let $u\colon G\to[0,1]$ be a continuous function such that 
$M_u\colon VN(G)\to VN(G)$ is unital and completely positive and let us apply 
the above properties with $V=M_u$. We fix some $p\in [2,\infty)$, the case when 
$p\in(1,2)$ being similar.

According to Lemma \ref{3SA}, $M_{u,2}$ is positive in the Hilbertian sense. Hence
$\sigma(M_{u,2})\subset [0,1]$. Then, by the functional calculus of self-adjoint operators,
$M_{u,2}$ is a Ritt operator. Indeed, for any $z\in\Cdb\setminus\overline{\Ddb}$, we have an estimate
$$
\bignorm{(z-M_{u,2})^{-1}}\leq \sup\bigl\{\vert z-t\vert^{-1}\, :\, t\in[0,1]\bigr\}\lesssim\vert z-1\vert^{-1}.
$$
By (\ref{5Terp}) and the  reiteration theorem for complex interpolation
(see \cite[Theorem 4.6.1]{BL}), we have
$$
L^p(A,\varphi) \simeq \bigl[A, L^2(A,\varphi)\bigr]_{\frac{2}{p}}.
$$
Hence by Proposition \ref{5Ritt-Inter}, $M_{u,p}$ is a Ritt operator.

The rest of the proof is similar to the one of \cite[Theorem 7.1]{Ar3}.
Let $q\in(p,\infty)$ and let $\theta_0\in\bigl(\frac{\pi}{2},\pi\bigr)$. 
By Theorem \ref{4AD} and Proposition \ref{2LpDil}, the operator
$M_{u,q}$ admits an isometric $q$-dilation. Hence by \cite[Proposition 6.2]{ALM}, the sectorial
operator
$I_{L^q} -M_{u,q}$ admits a bounded $H^\infty(\Sigma_{\theta_0})$ functional calculus.
(The non-commutative $L^q$-spaces considered in \cite[Section 6]{ALM} are the tracial ones. 
However the results contained therein extend verbatim to the framework of Haagerup
non-commutative $L^q$-spaces, using the fact that for $1<q<\infty$, these 
spaces satisfy the UMD property, see  \cite[Corollary 7.7]{PX}.)

Let $\theta_1\in \bigl(0,\frac{\pi}{2}\bigr)$.
The operator $I_{L^2} - M_{u,2}$ is self-adjoint and $\sigma(I_{L^2} - M_{u,2})\subset [0,1]$.
Hence using again the functional calculus of self-adjoint operators, we see that
$I_{L^2} -M_{u,2}$ admits a bounded $H^\infty(\Sigma_{\theta_0})$ functional calculus.
Applying again the  reiteration theorem for complex interpolation, we obtain that 
$$
L^p(A,\varphi) \simeq \bigl[L^q(A,\varphi), L^2(A,\varphi)\bigr]_{\alpha},
\qquad\hbox{with}\ \frac{1-\alpha}{q} \, +\, \frac{\alpha}{2}\,=\,\frac{1}{p}.
$$
Now applying the interpolation theorem for the $H^\infty$-functional calculus
of sectorial operators \cite[Proposition 4.9]{KKW} (see also 
\cite[Proposition 5.8]{JLX}), we deduce that 
$I_{L^p} -M_{u,p}$ admits a bounded $H^\infty(\Sigma_{\theta})$ functional calculus,
with
$\theta = (1-\alpha)\theta_0+\alpha\theta_1.$
We can choose $q,\theta_0,\theta_1$ so that $\theta\in\bigl(0,\frac{\pi}{2})$.
(We can actually obtain  bounded $H^\infty(\Sigma_{\theta})$ functional calculus for
any $\theta>\bigl(\frac12  -\frac{1}{p}\bigr)\pi$.)
By Proposition \ref{5Ritt-Sect}, this implies that 
$M_{u,p}$ admits a bounded $H^\infty$-functional calculus.
\end{proof}

\bigskip\noindent
{\bf Acknowledgements:}
The two authors were supported by the ANR project Noncommutative analysis on groups
and quantum groups (No./ANR-19-CE40-0002). 

\smallskip\noindent
{\bf Competing interest declaration:}
The authors declare that they have no known competing financial interests or personal relationships that 
could have appeared to influence the work reported in this paper.

\vskip 1cm

\bibliographystyle{abbrv}

\end{document}